\numberwithin{equation}{section}
\newtheorem{theorem}{Theorem}[section]
\newtheorem{lemma}[theorem]{Lemma}
\newtheorem{definition}[theorem]{Definition}
\def\NAT@def@citea{\def\@citea{\NAT@separator}}
\begin{document}
\vspace*{10mm}

\noindent
{\Large \bf Transversal and Hamiltonicity in a bipartite graph collection}

\vspace*{7mm}

\noindent
{\large \bf  Menghan Ma$^1$, Lihua You$^2$, Xiaoxue Zhang$^{2,*}$}
\noindent

\vspace{7mm}

\noindent
$^1$ Center for Discrete Mathematics, Fuzhou University, Fuzhou, 350108, P. R. China,
e-mail: {\tt menghanma664@163.com}\\
$^2$ School of Mathematical Sciences, South China Normal University, Guangzhou, 510631, P. R. China,
e-mail: {\tt ylhua@scnu.edu.cn}, {\tt zhang\_xx1209@163.com}.\\[2mm]
$^*$ Corresponding author
\noindent

\vspace{7mm}

\noindent
{\bf Abstract} \
Let $\mathbf{G}=\{G_1,\dots,G_{s}\}$ be a collection of $s$ bipartite graphs with the same bipartition $V=(X,Y)$. For a path $P$ with $V(P)=V$ and $|E(P)|=s$, if there exists an injection $\phi$: $E(P)\rightarrow [s]$ such that $e\in E(G_{\phi(e)})$ for each $e\in E(P)$, then we say that the Hamiltonian path $P$ is a $\mathbf{G}$-transversal. A bipartite graph collection  $\mathbf{G}$ is called Hamiltonian connected if for any two vertices $x\in X$ and $y\in Y$, there exists a $\mathbf{G}$-transversal isomorphic to a Hamiltonian path between $x$ and $y$.
In this paper, we give the minimum degree conditions that ensure the existence of a $\mathbf{G}$-transversal isomorphic to a Hamiltonian path and the Hamiltonian connectivity of a balanced bipartite graph collection $\mathbf{G}$, which improve the results of [Hu, Li, Li and Xu, Discrete Math., 2024]. Moreover, we also provide a minimum degree condition that guarantees a nearly balanced bipartite graph collection $\mathbf{G}$ contains a $\mathbf{G}$-transversal isomorphic to a Hamiltonian path.
\vspace{5mm}

\noindent
{\bf Keywords:} Hamiltonian path; Hamiltonian connected; Bipartite graph collection; Minimum degree

\noindent
{\bf MSC:} 05C38

\baselineskip=0.30in

\section{Introduction}

\hspace{1.5em}In this paper, we only consider finite and simple graphs. Undefined terminology and notation can be found in \cite{Bondy}.
Let $G=(V(G),E(G))$ be a simple graph with vertex set $V(G)$ and edge set $E(G)$.
For a vertex $u\in V(G)$, the set of neighbours of a vertex $u$ in $G$ is denoted by $N_{G}(u)$, and $d_{G}(u)=|N_{G}(u)|$ is the \emph{degree} of $u$ in $G$.
Let $\delta(G)=\min\{d_{G}(u): u\in V(G)\}$ denote the \emph{minimum degree} of $G$.
A path (cycle) which contains every vertex of a graph is called a \emph{Hamiltonian path (cycle)}. For positive integers $a<b$, let $[a]=\{1,2,\dots,a\}$ and $[a,b]=\{a,a+1,\dots,b\}$.

The concept of transversals frequently arises in combinatorics. Recently, the study on transversal in a graph collection has attracted extensive attention. This concept was first proposed by Joos and Kim \cite{Joos} in $2020$. Let $\mathbf{G}=\{G_1,\dots,G_s\}$ be a collection of $s$ graphs (not necessarily distinct) on the same vertex set $V$.
For a graph $H$ with $V(H)\subseteq V$ and $|E(H)|\leq s$, if there exists an injection $\phi$: $E(H)\rightarrow [s]$ such that $e\in E(G_{\phi(e)})$ for each $e\in E(H)$, then we say that $H$ is a \emph{partial $\mathbf{G}$-transversal} if $|E(H)|<s$, $H$ is a \emph{$\mathbf{G}$-transversal} if $|E(H)|=s$. We further say that $\mathbf{G}$ contains a (partial) transversal $H$.

A natural question arises: what properties should be imposed on $\mathbf{G}$ to guarantee that there exists a (partial) $\mathbf{G}$-transversal isomorphic to $H$?
In fact, some scholars have already begun to investigate this topic and have achieved some remarkable research results. Here we introduce two classic results.

In $1952$, Dirac \cite{Dirac} proved that every $n$-vertex graph with minimum degree at least $\frac{n}{2}$ contains a Hamiltonian cycle, a result now widely known as Dirac's theorem. In $2020$, Joos and Kim \cite{Joos} proved that if $\mathbf{G}=\{G_1,\dots, G_n\}$ is a collection of $n$ graphs on the same vertex set $V$ with $|V|=n$ such that $\delta(G_i)\geq\frac{n}{2}$ for each $i\in[n]$, then there exists a $\mathbf{G}$-transversal isomorphic to a Hamiltonian cycle. This result can be regarded as a generalization of Dirac's theorem, since it reduces to Dirac's theorem when $G_1=\dots=G_n$.

In $1907$, Mantel \cite{Mantel} showed that an $n$-vertex graph with more than $\frac{n^2}{4}$ edges has a triangle. In $2020$, Aharoni, DeVos, Gonz\'{a}lez Hermosillo de la Maza and \v{S}\'{a}mal in \cite{Aharoni} proved that if $\mathbf{G}=\{G_1,G_2,G_3\}$ is a collection of three graphs on the same vertex set $V$ with $|V|=n$ such that $|E(G_i)|>\frac{1+\tau^2}{4}n^2\approx 0.2557n^2$ for each $i\in[3]$, then there exists a $\mathbf{G}$-transversal isomorphic to a triangle, where $\tau=\frac{4-\sqrt{7}}{9}$. Moreover, they also show that this bound is best possible.
This result extends Mantel's theorem from a single graph to a collection of graphs.
It is worth noting that the two results require different bounds on the number of edges.

The above results extend the classic theorems of graph theory to the framework of a general graph collection, and more results are available in \cite{Li2023,Li2024,Cheng,Liu,Sun}. In addition, there have also been studies on a bipartite graph collection \cite{Bradshaw,Hu}, a directed graph collection \cite{Li2025,Gerbner,Babinski,Cheng2023,Chakraborti} and a hypergraph collection \cite{Cheng2023,Cheng2025,Lu2021,Lu2022,Gao2022}. For more results, readers may refer to a survey \cite{SunandWang}.

In this paper, we focus on the study of transversal in a bipartite graph collection. A bipartite graph with the bipartition $V=(X,Y)$ is called \emph{balanced} if $|X|=|Y|$.
In $2020$, Bradshaw \cite{{Bradshaw}} considered the minimum degree condition for a balanced bipartite graph collection $\mathbf{G}$ to guarantee that there exists a (partial) $\mathbf{G}$-transversal isomorphic to a cycle of order $l$ for each even intenger $l\in[4,2n]$. In $2024$, Hu, Li, Li and Xu \cite{Hu} provided the minimum degree condition for a balanced bipartite graph collection $\mathbf{G}$ on vertex set $V$ such that each vertex $v\in V$ is contained in a (partial) $\mathbf{G}$-transversal isomorphic to a cycle of order $l$ for each even intenger $l\in[4,2n]$, and gave the following result on Hamiltonian connectivity of a bipartite graph collection, which is stated as Theorem \ref{thm1-2}. For this purpose, they first proved the following Theorem \ref{thm1-1} in \cite{Hu}.

\begin{theorem}[\rm\!\!\cite{Hu}]\label{thm1-1}
Let $\mathbf{G}=\{G_1,\dots,G_{2n}\}$ be a collection of $2n$ balanced bipartite graphs of order $2n$ with the same bipartition $V=(X,Y)$. If $\delta(G_i)\geq\left \lceil \frac{n}{2}  \right \rceil$ for each $i\in[2n]$, then one of the following statements holds:
\begin{enumerate}[label=(\roman*), font=\normalfont, align=left, leftmargin=1em]
\item $\mathbf{G}$ contains a partial transversal isomorphic to a Hamiltonian path;
\item $n$ is even and $G_1=\dots=G_{2n}=K_{\frac{n}{2},\frac{n}{2}}\cup K_{\frac{n}{2},\frac{n}{2}}$.
\end{enumerate}
\end{theorem}

\begin{center}
\scalebox{1}[1]{\includegraphics{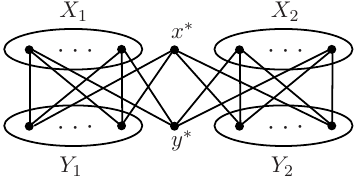}}\\
\captionof{figure}{Graph $F$.}
\end{center}

To facilitate the description, we first introduce a graph, denoted by $F$, as shown in Figure 1. Let $n$ be an odd integer, $F$ be a balanced bipartite graph with vertex bipartition $V(F)=(X,Y)$, where $X=\{x^*\}\cup X_1\cup X_2$ with $|X_1|=|X_2|=\frac{n-1}{2}$ and $Y=\{y^*\}\cup Y_1\cup Y_2$ with $|Y_1|=|Y_2|=\frac{n-1}{2}$, $E(F)$ be composed of all edges between $\{x^*\}$ and $Y_1\cup Y_2$, all edges between $\{y^*\}$ and $X_1\cup X_2$, and all edges between $X_i$ and $Y_i$ for $i=1,2$. Now we define the following graph collection: let $m$ be a positive integer and $\mathbf{F_m}=\{F_1,\dots,F_{m}\}$ be a bipartite graph collection with a common vertex bipartition such that $F_i=F$ or $F_i=F\cup\{x^*y^*\}$ for each $i\in[m]$.

A bipartite graph collection $\mathbf{G}$ with the same bipartition $V=(X,Y)$ is said to be \emph{Hamiltonian connected} if for any two vertices $x\in X$ and $y\in Y$, there exists a (partial) $\mathbf{G}$-transversal isomorphic to a Hamiltonian path between $x$ and $y$.

\begin{theorem}[\rm\!\!\cite{Hu}]\label{thm1-2}
Let $\mathbf{G}=\{G_1,\dots,G_{2n}\}$ be a collection of $2n$ balanced bipartite graphs of order $2n$ with the same bipartition $V=(X,Y)$. If $\delta(G_i)\geq\left \lceil \frac{n+1}{2}  \right \rceil$ for each $i\in[2n]$, then $\mathbf{G}$ is Hamiltonian connected or $\mathbf{G}=\mathbf{F_{2n}}$.
\end{theorem}

Note that Theorems \ref{thm1-1} and \ref{thm1-2} deal with a collection of $2n$ balanced bipartite graphs of order $2n$, rather than $2n-1$ balanced bipartite graphs. As noted by the authors in \cite{Hu}, it would be interesting to study the $\mathbf{G}$-transversal isomorphic to a Hamiltonian path in a collection $\mathbf{G}$ of $2n-1$ balanced bipartite graphs under the same degree condition. This motivates us to investigate this problem. In this paper, we obtain the following two theorems, which improve upon Theorems \ref{thm1-1} and \ref{thm1-2}, respectively.

\begin{theorem}\label{thm1-3}
Let $\mathbf{G}=\{G_1,\dots,G_{2n-1}\}$ be a collection of $2n-1$ balanced bipartite graphs of order $2n$ with the same bipartition $V=(X,Y)$. If $\delta(G_i)\geq\left \lceil \frac{n}{2}  \right \rceil$ for each $i\in[2n-1]$, then one of the following statements holds:
\begin{enumerate}[label=(\roman*), font=\normalfont, align=left, leftmargin=1em]
\item $\mathbf{G}$ contains a transversal isomorphic to a Hamiltonian path;
\item $n$ is even and $G_1=\dots=G_{2n-1}=K_{\frac{n}{2},\frac{n}{2}}\cup K_{\frac{n}{2},\frac{n}{2}}$.
\end{enumerate}
\end{theorem}

\begin{theorem}\label{thm1-4}
Let $\mathbf{G}=\{G_1,\dots,G_{2n-1}\}$ be a collection of $2n-1$ balanced bipartite graphs of order $2n$ with the same bipartition $V=(X,Y)$. If $\delta(G_i)\geq\left \lceil \frac{n+1}{2}  \right \rceil$ for each $i\in[2n-1]$, then $\mathbf{G}$ is Hamiltonian connected or $\mathbf{G}=\mathbf{F_{2n-1}}$.
\end{theorem}

A bipartite graph with the bipartition $V=(X,Y)$ is called \emph{nearly balanced} if $|X|-|Y|\in \{-1,1\}$. We give a minimum degree condition for a nearly balanced bipartite graph collection to contain a transversal isomorphic to a Hamiltonian path.

\begin{theorem}\label{thm1-5}
Let $\mathbf{G}=\{G_1,\dots,G_{2n}\}$ be a collection of $2n$ nearly balanced bipartite graphs of order $2n+1$ with the same bipartition $V=(X,Y)$. If $\delta(G_i)\geq\left \lceil \frac{n+1}{2}  \right \rceil$ for each $i\in[2n]$, then $\mathbf{G}$ contains a transversal isomorphic to a Hamiltonian path.
\end{theorem}

It is worth mentioning that $\left \lceil \frac{n+1}{2}  \right \rceil$ is best possible. Now we give an example to illustrate this. Let $n$ be even, $\mathbf{G}=\{G_1,\dots,G_{2n}\}$ be a collection of $2n$ nearly balanced bipartite graphs with the same bipartition $V=(X,Y)$, and $G_1=\dots=G_{2n}=K_{\frac{n}{2},\frac{n}{2}}\cup K_{\frac{n}{2}+1,\frac{n}{2}}$. Clearly, $\delta(G_i)\geq\left \lceil \frac{n}{2}  \right \rceil$ for each $i\in[2n]$, but $\mathbf{G}$ contains no transversal isomorphic to a Hamiltonian path.

The rest of this paper is organized as follows. In Section $2$, we introduce the necessary notation as well as some results that will be used in subsequent proofs.
In Sections $3$-$5$, we present the proofs of Theorems \ref{thm1-3}-\ref{thm1-5}, respectively.

\section{Preliminaries}
\hspace{1.5em}In this section, we present some notation and results that will be used later.

\subsection{Notation}
\hspace{1.5em} For two disjoint vertex subsets $U$ and $W$ of $G$, $G[U,W]$ denotes the bipartite subgraph of $G$ with vertex bipartition $(U,W)$ and edge set $E(G[U,W])=\{uw\in E(G): u\in U,w\in W$\}. We use $E_G[U,W]$ for $E(G[U,W])$ for short.
For a path $P=u_1u_2\cdots u_k$, we denote the subpath of $P$ between $u_i$ and $u_j$ by $u_iPu_j$, where $i,j\in[k]$. For two distinct vertices $v_i,v_j$ in a cycle $C=v_1v_2\cdots v_kv_1$ with $i<j$, we denote the two paths between them as the forward segment $v_iCv_j=v_iv_{i+1}\cdots v_j$ and the backward segment $v_iC^{-}v_j=v_iv_{i-1}\cdots v_j$, where subscripts are taken modulo $k$. Let $im(\phi)$ be the image of $\phi$.

Joos and Kim introduced the auxiliary digraph technique in \cite{Joos}, and our proofs are also based on this important tool.
For a digraph $D$, let $A(D)$ be the arc set of $D$. For a vertex $u\in V(D)$, we denote the in-neighbourhood and out-neighbourhood of $u$ in $D$ by $N^{-}_{D}(u)$ and $N^{+}_{D}(x)$, and $d^{-}_{D}(u)=|N^{-}_{D}(u)|$ and $d^{+}_{D}(x)=|N^{+}_{D}(u)|$ be the \emph{indegree} and \emph{outdegree} of $u$, respectively.


\subsection{Lemmas}

\hspace{1.5em} We follow a method of Bradshaw \cite{Bradshaw} used for investigating the existence of a $\mathbf{G}$-transversal isomorphic to a Hamiltonian cycle in a bipartite graph collection $\mathbf{G}$, and obtain the following lemmas.

To proceed with the lemmas, we first define a digraph $D$ as follows.
\begin{definition}\label{def1-1}
Let $n\geq3$, $\mathbf{G}=\{G_1,\dots,G_{2n-1}\}$ be a collection of $2n-1$ balanced bipartite graphs with the same bipartition $V=(X,Y)$ with $X=\{u_1,u_3,\dots,u_{2n-3},x\}$ and $Y=\{u_2,u_4,\dots,u_{2n-2},y\}$, $C=u_1u_2\cdots u_{2n-2}u_1$ be a partial $\mathbf{G}$-transversal isomorphic to a cycle of order $2n-2$ with an associated injective $\phi$: $E(C)\to [2n-1]$ such that $\phi(u_iu_{i+1})=i$ for $i\in[2n-3]$ and $\phi(u_{2n-2}u_1)=2n-2$. Let $D=(V,A)$ be a digraph on vertex set $V$ such that
\begin{equation}\tag{1}\label{eq.1}
A=\bigcup_{u_i\in X\setminus\{x\}}\{\overrightarrow{u_iu}:u\in Y, u_iu\in E(G_i) \text{ and } u\neq u_{i+1}\}.
\end{equation}
We call $D$ the associated digraph with $\mathbf{G}$.
\end{definition}

\begin{lemma}\label{lem2-1}
Let $\mathbf{G}=\{G_1,\dots,G_{2n-1}\}$, $V$, $X$, $Y$, $C$, $D$, $x$ and $y$ be defined as in Definition \ref{def1-1}. If $\delta(G_i)\geq\left \lceil \frac{n}{2}  \right \rceil$ for each $i\in[2n-1]$ and there exists $k\in\{2,4,\dots,2n-2\}$ such that $d^{-}_{D}(u_k)\geq \left \lfloor \frac{n}{2} \right \rfloor$ and $|(N_{G_{2n-1}}(x)\cup N_{G_k}(y))\cap V(C)|\geq n-1$, then there exists a $\mathbf{G}$-transversal isomorphic to a Hamiltonian path between $x$ and $y$.
\end{lemma}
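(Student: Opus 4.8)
The goal is to absorb the two leftover vertices $x$ and $y$ into the long cycle $C$. Since $C$ realises exactly the colours $[2n-2]$ and misses colour $2n-1$, any Hamiltonian $x$--$y$ path obtained by deleting one edge from $C$, rerouting, and attaching $x$ and $y$ as pendant vertices will have $2n-3$ colours inside and must use colour $2n-1$ on one of the two pendant edges and the colour freed by the deleted $C$-edge on the other. The plan is to arrange that the deleted colour is $k$, that $y$'s pendant edge receives colour $k$ (so it must lie in $E(G_k)$, matching the hypothesis on $N_{G_k}(y)$), and that $x$'s pendant edge receives colour $2n-1$ (so it must lie in $E(G_{2n-1})$, matching the hypothesis on $N_{G_{2n-1}}(x)$).

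\textbf{Base path and rotations.} First I would take the canonical Hamiltonian path $P_0=u_{k+1}u_{k+2}\cdots u_{k-1}u_k$ of $V(C)$, obtained by deleting the colour-$k$ edge $u_ku_{k+1}$; it realises the colour set $[2n-2]\setminus\{k\}$ via the injection inherited from $\phi$, with endpoints $u_{k+1}\in X$ and $u_k\in Y$. Then, for each $u_i\in N_D^-(u_k)$ (so $i$ is odd, $i\neq k-1$, and $u_iu_k\in E(G_i)$) I would rotate: replace the edge $u_iu_{i+1}$ of colour $i$ by the edge $u_iu_k$ (which also lies in $G_i$, hence may again take colour $i$) and reverse the segment from $u_{i+1}$ to $u_k$, producing the Hamiltonian path $u_{k+1}\cdots u_i\,u_k\,u_{k-1}\cdots u_{i+1}$ of $V(C)$, which still realises $[2n-2]\setminus\{k\}$ and has endpoints $u_{k+1}$ and $u_{i+1}\in Y$. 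Collecting $P_0$ with all these rotations gives a family of Hamiltonian paths of $V(C)$, each missing exactly colour $k$ and each with $u_{k+1}$ as one endpoint, whose set $S\subseteq Y\cap V(C)$ of available second endpoints satisfies $|S|\ge d_D^-(u_k)+1\ge\lfloor n/2\rfloor+1$ (here $u_k\in S$ but $u_k\notin\{u_{i+1}:u_i\in N_D^-(u_k)\}$ because $i\neq k-1$, and $u_i\mapsto u_{i+1}$ is injective).

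\textbf{Attachment.} Since $N_{G_{2n-1}}(x)\subseteq(Y\cap V(C))\cup\{y\}$ and $|N_{G_{2n-1}}(x)|\ge\delta(G_{2n-1})\ge\lceil n/2\rceil$, we get $|N_{G_{2n-1}}(x)\cap V(C)|\ge\lceil n/2\rceil-1$; as $S$ and $N_{G_{2n-1}}(x)\cap V(C)$ both lie in the $(n-1)$-element set $Y\cap V(C)$ and their sizes sum to at least $n$, they meet in some vertex $v$. Taking the member of the family ending at $v$, prepending the edge $xv$ with colour $2n-1$, and appending the edge $yu_{k+1}$, the only colour left unused is $k$, which is then placed on $yu_{k+1}$; this produces the required $\mathbf{G}$-transversal as soon as $u_{k+1}\in N_{G_k}(y)$. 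The remaining possibility, $u_{k+1}\notin N_{G_k}(y)$, is exactly where the full strength of $|(N_{G_{2n-1}}(x)\cup N_{G_k}(y))\cap V(C)|\ge n-1$ enters: this forces $N_{G_k}(y)$ to miss $u_{k+1}$ and hence $N_{G_{2n-1}}(x)\cap V(C)$ to be correspondingly large. I would then exploit that surplus either by interchanging colours $k$ and $2n-1$ on the two pendant edges (attaching $yu_{k+1}$ with colour $2n-1$, which is available if $u_{k+1}\in N_{G_{2n-1}}(y)$) or, when even that fails, by a secondary rotation at the fixed endpoint $u_{k+1}$ — using the abundance of $G_{2n-1}$-edges at $x$ together with the minimum-degree bounds — to relocate the endpoint that $y$ must reach into $N_{G_k}(y)$.

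\textbf{Main obstacle.} I expect this last case to be the crux. The rotation mechanism only controls one endpoint of the Hamiltonian path of $V(C)$ (through $d_D^-(u_k)\ge\lfloor n/2\rfloor$), whereas we need \emph{both} endpoints to fall into prescribed neighbourhoods of $x$ and of $y$ simultaneously while keeping the colour bookkeeping consistent; this forces a delicate counting argument that plays the combined bound $|(N_{G_{2n-1}}(x)\cup N_{G_k}(y))\cap V(C)|\ge n-1$ against the individual minimum-degree bounds, with the tight instances (small $n$, or neighbourhoods meeting the two sides of $C$ in exactly $\lceil n/2\rceil-1$ vertices) requiring separate attention. By contrast, checking that every object constructed along the way is a genuine path and realises each of the $2n-1$ colours exactly once is routine.
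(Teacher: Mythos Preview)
Your rotation scheme is set up correctly, and the count $|S|\ge d_D^-(u_k)+1$ is fine. The gap is exactly where you flag it: your construction pins one endpoint at $u_{k+1}$, so the whole argument collapses to the single condition $u_{k+1}\in N_{G_k}(y)$, which the hypotheses do not provide. Your proposed repairs do not close this. The colour swap (put colour $2n-1$ on $yu_{k+1}$) replaces one unverified adjacency by another, $u_{k+1}\in N_{G_{2n-1}}(y)$, which is equally unavailable. A ``secondary rotation'' at $u_{k+1}$ using colour $k$ consumes colour $k$ internally and frees some other colour $c$, so now you need $y$ adjacent to a new endpoint in $G_c$; you have no control over $c$, and the union hypothesis says nothing about $N_{G_c}(y)$. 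Your reading of the union bound is also off: knowing $u_{k+1}\notin N_{G_k}(y)$ removes one vertex from $N_{G_k}(y)\cap V(C)$ but does not force $N_{G_{2n-1}}(x)\cap V(C)$ to be large.

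The paper avoids this trap by reversing the order of operations. It first spends the union hypothesis to locate \emph{consecutive} cycle vertices $u_t\in N_{G_{2n-1}}(x)$ and $u_{t+1}\in N_{G_k}(y)$ (a short pigeonhole on the alternating $X/Y$ structure of $C$, using $\delta\ge 2$ to rule out the degenerate case). Attaching $x$ at $u_t$ with colour $2n-1$ and $y$ at $u_{t+1}$ with colour $k$ frees colour $t$. The in-degree hypothesis $d_D^-(u_k)\ge\lfloor n/2\rfloor$ is then combined not with a neighbourhood of $x$ or $y$ but with $N_{G_t}(u_{k+1})$ (size $\ge\lceil n/2\rceil-1$ from the minimum-degree assumption) to find an index $s$ with $u_s\in N_D^-(u_k)$ and $u_{s+1}u_{k+1}\in E(G_t)$; deleting the three cycle edges of colours $k,s,t$ and inserting $xu_t,\,u_{k+1}u_{s+1},\,u_ku_s,\,yu_{t+1}$ with colours $2n-1,t,s,k$ gives the Hamiltonian $x$--$y$ transversal in one stroke. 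The essential idea you are missing is that the union bound should be used to pin down \emph{both} attachment points simultaneously (as adjacent cycle vertices), after which $d_D^-(u_k)$ is used only for internal rerouting.
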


\begin{proof}
By contradiction, we assume that there exists no $\mathbf{G}$-transversal isomorphic to a Hamiltonian path between $x$ and $y$.
Without loss of generality, let $d^{-}_{D}(u_2)\geq \left \lfloor \frac{n}{2} \right \rfloor$ and $|(N_{G_{2n-1}}(x)\cup N_{G_2}(y))\cap V(C)|\geq n-1$.

Since $|V(C)|=2n-2$ and $|(N_{G_{2n-1}}(x)\cup N_{G_2}(y))\cap V(C)|\geq n-1$ and $\delta(G_i)\geq\left \lceil \frac{n}{2}  \right \rceil\geq2$ for each $i\in[2n-1]$,
there exists $u_t\in Y\cap V(C)$ such that $t$ is even and $u_t\in N_{G_{2n-1}}(x)$ and $N_{G_2}(y)\cap \{u_{t+1},u_{t-1}\}\neq \emptyset$. Without loss of generality, we say $yu_{t+1}\in E(G_2)$.

Clearly, $u_2u_3\notin E(G_t)$. Otherwise, $yu_{t+1}Cu_tx$ is a $\mathbf{G}$-transversal isomorphic to a Hamiltonian path between $x$ and $y$ with an associated injective $\phi_1$ that arises from $\phi$ by setting $\phi_1(yu_{t+1})=2$, $\phi_1(u_2u_3)=t$ and $\phi_1(xu_t)=2n-1$, a contradiction. It follows that $t\neq2$ by $u_2u_3\in E(G_2)$.

Now we define the sets $S^{-}$ and $S_{t}$ as follows:
\[
S^{-}=\{s\in\{1,3,\dots,2n-3\}:u_s\in N^{-}_{D}(u_2)\},
\]
\[
S_{t}=\{s\in\{1,3,\dots,2n-3\}:u_{s+1}u_3\in E(G_t)\}.
\]
Clearly, $1\notin S^{-}$ by $u_1\notin N^{-}_{D}(u_2)$ and $1\notin S_{t}$ by $u_2u_3\notin E(G_t)$. Then $|S^{-}|=d^{-}_{D}(u_2)\geq\left \lfloor \frac{n}{2} \right \rfloor$ by \eqref{eq.1}, and $|S_{t}|=|N_{G_t}(u_3)\cap V(C)|\geq \left \lceil \frac{n}{2}  \right \rceil-1$ by $\delta(G_t)\geq\left \lceil \frac{n}{2}  \right \rceil$. We have $S^{-}\cap S_{t}\neq\emptyset$ since $|S^{-}|+|S_{t}|\geq n-1>n-2\geq|S^{-}\cup S_{t}|$ by $S^{-}\cup S_{t}\subseteq\{3,\dots,2n-3\}$.
Thus there exists $s\in\{3,\dots,2n-3\}$ such that $u_s\in N^{-}_{D}(u_2)$ and $u_{s+1}u_3\in E(G_t)$.
Note that $s\neq t$ since $s$ is odd and $t$ is even.
It follows that $(C-\{u_2u_3,u_su_{s+1},u_tu_{t+1}\})\cup\{xu_t,u_3u_{s+1},u_2u_s ,yu_{t+1}\}$ is a $\mathbf{G}$-transversal isomorphic to a Hamiltonian path between $x$ and $y$ (see Figure 2 (a)) with an associated injective $\phi_2$ that arises from $\phi$ by setting $\phi_2(xu_t)=2n-1$, $\phi_2(u_3u_{s+1})=t$, $\phi_2(u_2u_s)=s$ and $\phi_2(yu_{t+1})=2$, a contradiction.

This completes the proof of Lemma \ref{lem2-1}.
\end{proof}

\begin{lemma}\label{lem2-2}
Let $\mathbf{G}=\{G_1,\dots,G_{2n-1}\}$, $V$, $X$, $Y$, $C$, $D$, $x$ and $y$ be defined as in Definition \ref{def1-1}. If there exists $k\in\{2,4,\dots,2n-2\}$ such that $d^{-}_{D}(u_k)+|N_{G_k}(x)\cap V(C)|\geq n-1$ and $u_{k+1}y\in E(G_{2n-1})$, then there exists a $\mathbf{G}$-transversal isomorphic to a Hamiltonian path between $x$ and $y$.

\end{lemma}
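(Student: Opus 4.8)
The plan is to follow the rerouting strategy behind the proof of Lemma~\ref{lem2-1}. By cyclically relabelling the vertices of $C$ and permuting $\{G_1,\dots,G_{2n-2}\}$ and the colours in $[2n-2]$ accordingly, I may assume $k=2$, so that $d^{-}_{D}(u_2)+|N_{G_2}(x)\cap V(C)|\geq n-1$ and $u_3y\in E(G_{2n-1})$. I then introduce the two subsets of $\{1,3,\dots,2n-3\}$
\[
S^{-}=\{s:u_s\in N^{-}_{D}(u_2)\},\qquad S_{x}=\{s:xu_{s+1}\in E(G_2)\}.
\]
By \eqref{eq.1} the arc $\overrightarrow{u_1u_2}$ is forbidden, so $1\notin S^{-}$ and $|S^{-}|=d^{-}_{D}(u_2)$; and since $s\mapsto u_{s+1}$ is a bijection from $\{1,3,\dots,2n-3\}$ onto $Y\cap V(C)$ while $N_{G_2}(x)\subseteq Y$, we get $|S_{x}|=|N_{G_2}(x)\cap V(C)|$. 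Hence $|S^{-}|+|S_{x}|\geq n-1$.

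Next I split into two cases. If $1\in S_{x}$, that is $xu_2\in E(G_2)$, then $(C-u_2u_3)\cup\{xu_2,\,u_3y\}$, traversed as $x\,u_2\,u_1\,u_{2n-2}\,\cdots\,u_3\,y$, is a Hamiltonian path with endpoints $x$ and $y$, and the injection obtained from $\phi$ by deleting $u_2u_3$ and setting $\phi(xu_2)=2$, $\phi(u_3y)=2n-1$ hits every colour of $[2n-1]$ exactly once, giving the transversal. If $1\notin S_{x}$, then $S^{-},S_{x}\subseteq\{3,5,\dots,2n-3\}$, a set of only $n-2$ elements, so $|S^{-}|+|S_{x}|\geq n-1>n-2$ forces $S^{-}\cap S_{x}\neq\emptyset$; choosing $s$ there gives $u_su_2\in E(G_s)$ and $xu_{s+1}\in E(G_2)$, and then $(C-\{u_2u_3,\,u_su_{s+1}\})\cup\{xu_{s+1},\,u_2u_s,\,u_3y\}$ is a Hamiltonian path from $x$ to $y$: deleting the two edges cuts $C$ into the path through $u_{s+1},u_{s+2},\dots,u_2$ and the path $u_3u_4\cdots u_s$ (which is the single vertex $u_3$ when $s=3$), and the three added edges splice these together with $x$ and $y$ at the ends. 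Re-colouring $xu_{s+1}\mapsto 2$, $u_2u_s\mapsto s$, $u_3y\mapsto 2n-1$ (the colours $2$ and $s$ being freed from $C$, and $2n-1$ being the colour not appearing on $C$) produces the required $\mathbf{G}$-transversal.

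Since this is a routine rerouting argument, I do not expect a genuine obstacle; the points that demand care are the justification of the reduction to $k=2$, the exact identities $|S^{-}|=d^{-}_{D}(u_2)$ and $|S_{x}|=|N_{G_2}(x)\cap V(C)|$ — in particular the fact $1\notin S^{-}$, which is precisely what makes the pigeonhole in the second case tight enough — and the injectivity check in the degenerate case $s=3$, where the ``new'' edge $u_2u_s=u_2u_3$ coincides with a deleted edge of $C$ but now receives colour $3$ instead of $2$; this is legitimate exactly because $s=3\in S^{-}$ guarantees $u_2u_3\in E(G_3)$.
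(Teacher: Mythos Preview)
Your proof is correct and essentially identical to the paper's: the same sets $S^{-}$ and $S_{x}$ (the paper calls the latter $S$), the same observation that $1\notin S^{-}$, the same pigeonhole inside $\{3,5,\dots,2n-3\}$, and the same rerouted path $xu_{s+1}Cu_2u_sC^{-}u_3y$. The only cosmetic difference is that the paper argues by contradiction and first rules out $xu_2\in E(G_2)$ (your Case~1) before running the pigeonhole, whereas you handle it as an explicit case yielding the path $xu_2C^{-}u_3y$ directly; your extra remark on the degenerate case $s=3$ is a nice sanity check that the paper leaves implicit.
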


\begin{proof}
By contradiction, we assume that there exists no $\mathbf{G}$-transversal isomorphic to a Hamiltonian path between $x$ and $y$.

Without loss of generality, we assume $k=2$.
Since $u_3y\in E(G_{2n-1})$, we have $xu_2\notin E(G_2)$. Otherwise, $xu_2C^{-}u_3y$ is a $\mathbf{G}$-transversal isomorphic to a Hamiltonian path between $x$ and $y$, a contradiction.

Now we define the sets $S^{-}$ and $S$ as follows:
\[
S^{-} =\{s\in\{1,3,\dots,2n-3\}:u_s\in N^{-}_{D}(u_2)\},
\]
\[
S =\{s\in\{1,3,\dots,2n-3\}:xu_{s+1}\in E(G_{2})\}.
\]
Clearly, $1\notin S^{-}$ by $u_1\notin N^{-}_{D}(u_2)$ and $1\notin S$ by $xu_2\notin E(G_2)$. Thus $|S^{-}\cup S|\leq n-2$ by $S^{-}\cup S\subseteq\{3,\dots,2n-3\}$. In addition, $|S^{-}|+|S|=d^{-}_{D}(u_2)+|N_{G_2}(x)\cap V(C)|\geq n-1$, which implies $S^{-}\cap S\neq\emptyset$. It follows that there exists $s\in\{3,\dots,2n-3\}$ such that $u_su_2\in E(G_{s})$ by \eqref{eq.1} and $xu_{s+1}\in E(G_{2})$. Therefore, $xu_{s+1}Cu_2u_sC^{-}u_3y$ is a $\mathbf{G}$-transversal isomorphic to a Hamiltonian path between $x$ and $y$ (see Figure 2 (b)) with an associated injective $\phi_3$ that arises from $\phi$ by setting $\phi_3(u_{s+1}x)=2$, $\phi_3(u_2u_s)=s$ and $\phi_3(u_3y)=2n-1$, a contradiction.

This completes the proof of Lemma \ref{lem2-2}.
\end{proof}

\begin{center}
\scalebox{1.4}[1.4]{\includegraphics{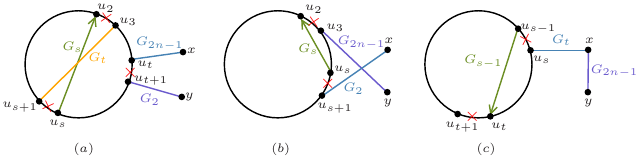}}\\
\captionof{figure}{$\mathbf{G}$-transversal isomorphic to a Hamiltonian path, where ``$\times$" represents the edge being deleted. $(a)$ illustrates the case where $3<t+1<s\leq2n-3$.}
\end{center}

\section{The proof of Theorem \ref{thm1-3}}
\hspace{1.5em}In this section, we prove Theorem \ref{thm1-3}. When $n=1$, the theorem is trivial. When $n=2$, it is straightforward to verify that the theorem holds. Therefore, we assume $n\geq3$ in the following proof.

\noindent\textbf{\textit{Proof of Theorem \ref{thm1-3}}.}
By contradiction, we assume neither $(\mathrm{i})$ nor $(\mathrm{ii})$ holds.

\noindent\textbf{Claim 3.1.}
$\mathbf{G}$ contains a partial transversal isomorphic to a path of order $2n-2$.

\begin{proof}
Let $P$ be a partial $\mathbf{G}$-transversal isomorphic to a path such that $|V(P)|$ is maximum.
If $|V(P)|\geq 2n-2$, then Claim 3.1 holds.

Now we assume that $P=u_1u_2\cdots u_p$ with $|V(P)|=p<2n-2$. Then $|V\setminus V(P)|\geq 3$. Let $X'=X\setminus (V(P)\cap X)$ and $Y'=Y\setminus (V(P)\cap Y)$. Clearly, $|X'|\geq1$ and $|Y'|\geq1$. Without loss of generality, let $P$ have an associated injective $\varphi$: $E(P)\to [2n-1]$ with $\varphi(u_iu_{i+1})=i$ for $i\in[p-1]$ and $\{m_1,m_2,m_3\}\subseteq[2n-1]\setminus im(\varphi)=[p,2n-1]$.

Without loss of generality, we assume $u_1\in X$. Pick $y'\in Y'$. Then we consider the following sets:
\[
S_1=\{s\in[p-1]:u_1u_{s+1}\in E(G_{m_1})\},
\]
\[
S_2=\{s\in[p-1]:y'u_s\in E(G_{m_2})\}.
\]

Firstly, we show that $S_1\cap S_2=\emptyset$. Otherwise, we take $s\in S_1\cap S_2$, then $P'=y'u_sPu_1u_{s+1}Pu_p$ is a partial $\mathbf{G}$-transversal isomorphic to a path that is longer than $P$, where $P'$ has an associated injective $\varphi_1$ that arises from $\varphi$ by setting $\varphi_1(y'u_s)=m_2$ and $\varphi_1(u_1u_{s+1})=m_1$, a contradiction.

Secondly, we show that $n$ is even and $G_{m_2}[X',Y']$ is a complete bipartite graph.
By the maximality of $|V(P)|$, we have $N_{G_{m_1}}(u_1)\subseteq V(P)$, it follows that $|S_1|=d_{G_{m_1}}(u_1)\geq\delta(G_{m_1})\geq\left \lceil \frac{n}{2}  \right \rceil$. Moreover, since $N_{G_{m_2}}(y')\subseteq X$ and $S_1\cap S_2=\emptyset$, we have $d_{G_{m_2}}(y')\leq n-|S_1|\leq \left \lfloor \frac{n}{2} \right \rfloor$, then $d_{G_{m_2}}(y')=\frac{n}{2}$ and $n$ is even by $d_{G_{m_2}}(y')\geq\delta(G_{m_2})\geq \left \lceil \frac{n}{2}  \right \rceil$, and thus $d_{G_{m_2}}(y')=n-|S_1|$ and $|S_1|=\frac{n}{2}$, which implies that $y'u\in E(G_{m_2})$ for any $u\in X'$, and $G_{m_2}[X',Y']$ is a complete bipartite graph since $y'$ is an arbitrary vertex in $Y'$.

Note that $N_{G_{m_1}}(u_1)\subseteq V(P)\cap Y$ and $N_{G_{m_3}}(u_p)\subseteq V(P)$ by the maximality of $|V(P)|$. Let $x'\in X'$. Then $x'y'\in E(G_{m_2})$. Next, we consider the following two cases.

$\mathbf{Case~1:}$ $u_p\in X$.

Clearly, $p$ is odd, and we can define $S_3$ and $S_4$ as follows:
\[
S_3=\{s\in\{0,2,\dots,p-3\}:u_1u_{s+2}\in E(G_{m_1})\},
\]
\[
S_4=\{s\in\{2,4,\dots,p-1\}:u_pu_s\in E(G_{m_3})\},
\]
and thus $|S_3|=d_{G_{m_1}}(u_1)\geq \delta(G_{m_1})\geq \frac{n}{2}$, $|S_4|=d_{G_{m_3}}(u_p)\geq \delta(G_{m_3})\geq \frac{n}{2}$.

Now we show $S_3\cap S_4=\emptyset$. Otherwise, assume $s\in S_3\cap S_4$, then $C=u_1Pu_su_pPu_{s+2}u_1$ is a partial $\mathbf{G}$-transversal isomorphic to a cycle, where $C$ has an associated injective $\varphi_2$ that arises from $\varphi$ by setting $\varphi_2(u_su_p)=m_3$ and $\varphi_2(u_1u_{s+2})=m_1$.
Clearly, $s\in [2n-1]\setminus im(\varphi_2)$ and $N_{G_s}(y')\cap V(P)=N_{G_s}(x')\cap V(P)=\emptyset$ by the maximality of $|V(P)|$ and $x'y'\in E(G_{m_2})$, which implies $N_{G_s}(y')\subseteq X'$ and $N_{G_s}(x')\subseteq Y'$, thus $|X'|\geq \frac{n}{2}$ and $|Y'|\geq \frac{n}{2}$ by $\delta(G_s)\geq \frac{n}{2}$. Since $N_{G_{m_1}}(u_1)\subseteq V(P)\cap Y$, we have $|\{u_2,u_4,\dots,u_{p-1}\}|=|V(P)\cap Y|\geq \frac{n}{2}$, thus $|\{u_1,u_3,\dots,u_{p}\}|=|V(P)\cap X|\geq \frac{n}{2}+1$, it follows that $|X|=|V(P)\cap X|+|X'|\geq n+1$, which contradicts the fact that $|X|=n$.

Clearly, $|S_3\cup S_4|\leq \frac{p+1}{2}<\frac{2n-1}{2}$. Since $S_3\cap S_4=\emptyset$, $|S_3|\geq \frac{n}{2}$ and $|S_4|\geq \frac{n}{2}$, we have $n\leq|S_3|+|S_4|=|S_3\cup S_4|<\frac{2n-1}{2}$, a contradiction.

$\mathbf{Case~2:}$ $u_p\in Y$.

Similar to Case $1$, we can define the sets $S'_3$ and $S'_4$ as follows:
\[
S'_3=\{s\in\{1,3,\dots,p-1\}:u_1u_{s+1}\in E(G_{m_1})\},
\]
\[
S'_4=\{s\in\{1,3,\dots,p-1\}:u_pu_s\in E(G_{m_3})\}.
\]
Clearly, $p$ is even, $|S'_3|=d_{G_{m_1}}(u_1)\geq \delta(G_{m_1})\geq \frac{n}{2}$ and $|S'_4|=d_{G_{m_3}}(u_p)\geq \delta(G_{m_3})\geq \frac{n}{2}$.

Now we show that $S'_3\cap S'_4=\emptyset$. Otherwise, $C'=u_1Pu_su_pPu_{s+1}u_1$ is a partial $\mathbf{G}$-transversal isomorphic to a cycle, where $s\in S'_3\cap S'_4$ and $C'$ has an associated injective $\varphi_3$ that arises from $\varphi$ by setting $\varphi_3(u_su_p)=m_3$ and $\varphi_3(u_1u_{s+1})=m_1$.

For any $t\in[2n-1]\setminus im(\varphi_3)$, by the maximality of $|V(P)|$, we have $N_{G_t}(y')\subseteq X'$ and $N_{G_t}(x')\subseteq Y'$, thus $|X'|\geq \frac{n}{2}$ and $|Y'|\geq \frac{n}{2}$. Since $N_{G_{m_1}}(u_1)\subseteq V(P)\cap Y$ and $N_{G_{m_3}}(u_p)\subseteq V(P)\cap X$, $d_{G_{m_1}}(u_1)\geq \frac{n}{2}$ and $d_{G_{m_3}}(u_p)\geq \frac{n}{2}$, we obtain $|V(P)\cap X|\geq \frac{n}{2}$ and $|V(P)\cap Y|\geq \frac{n}{2}$, which implies $|V(P)\cap X|=|V(P)\cap Y|=\frac{n}{2}$ and $p=n$ by $n=|X|=|X'|+|V(P)\cap X|=|Y|=|Y'|+|V(P)\cap Y|$. Furthermore, we have $G_t=K_{\frac{n}{2},\frac{n}{2}}\cup K_{\frac{n}{2},\frac{n}{2}}$ by $E_{G_t}[V(P)\cap X,Y']=E_{G_t}[V(P)\cap Y,X']=\emptyset$ and $\delta(G_t)\geq \frac{n}{2}$. For any edge $e\in C'$ with $\varphi_3(e)=k$, we obtain $\varphi_3'$ from $\varphi_3$ by replacing $\varphi_3(e)=k$ with $\varphi_3'(e)=s$. Then we have $G_k=K_{\frac{n}{2},\frac{n}{2}}\cup K_{\frac{n}{2},\frac{n}{2}}$. The arbitrariness of $k$ implies that $G_t=K_{\frac{n}{2},\frac{n}{2}}\cup K_{\frac{n}{2},\frac{n}{2}}$ for any $t\in im(\varphi_3)$. Thus $G_1=\cdots=G_{2n-1}=K_{\frac{n}{2},\frac{n}{2}}\cup K_{\frac{n}{2},\frac{n}{2}}$, which implies $(\mathrm{ii})$ holds, a contradiction.


Combining above arguments, $S'_3\cap S'_4=\emptyset$.
Since $|S'_3|\geq \frac{n}{2}$ and $|S'_4|\geq \frac{n}{2}$, we have $n\leq|S'_3|+|S'_4|=|S'_3\cup S'_4|\leq \frac{p}{2}<\frac{2n-2}{2}=n-1$, a contradiction.

This completes the proof of Claim 3.1.
\end{proof}

\noindent\textbf{Claim 3.2.}
Let $X=\{v_1,v_3,\dots,v_{2n-3},v_{2n-1}\}$, $Y=\{v_2,v_4,\dots,v_{2n-2},v_{2n}\}$, $P^*=v_1v_2\cdots \\v_{2n-2}$ be a partial $\mathbf{G}$-transversal with an associated injective $\psi$: $E(P^*)\to [2n-1]$ and $\{l_1,l_2\}=[2n-1]\setminus im(\psi)$. If $\mathbf{G}$ contains no partial transversal isomorphic to a cycle of order $2n-2$, then we have:
\begin{enumerate}[label={}, font=\normalfont, align=left, leftmargin=1em, labelsep=0em, itemindent=0em, itemsep=0.3ex]
    \item $(\mathrm{c_1})$ if $v_1v_{2n}\notin E(G_{l_1})$, then $v_{2n-2}v_{2n-1}\in E(G_{l_2})$.
    \item $(\mathrm{c_2})$ if $v_{2n-2}v_{2n-1}\notin E(G_{l_1})$, then $v_1v_{2n}\in E(G_{l_2})$.
\end{enumerate}

\begin{proof}
We only show $(\mathrm{c_1})$. The proof of $(\mathrm{c_2})$ is omitted since it can follow by the similar argument as $(\mathrm{c_1})$.

Let $v_1v_{2n}\notin E(G_{l_1})$ and $v_{2n-2}v_{2n-1}\notin E(G_{l_2})$.
Then we consider the following sets:
\[
S_5=\{s\in\{1,3,\dots,2n-3\}:v_{2n-2}v_{s}\in E(G_{l_2})\},
\]
\[
S_6=\{s\in\{1,3,\dots,2n-3\}:v_1v_{s+1}\in E(G_{l_1})\}.
\]
Since $v_{2n-2}v_{2n-1}\notin E(G_{l_2})$ and $v_1v_{2n}\notin E(G_{l_1})$, we have $|S_5|=d_{G_{l_2}}(v_{2n-2})\geq\left \lceil \frac{n}{2}  \right \rceil$, $|S_6|=d_{G_{l_1}}(v_1)\geq \left \lceil \frac{n}{2}  \right \rceil$, and thus $|S_5|+|S_6|\geq n$. Then $S_5\cap
S_6\neq\emptyset$ by $|S_5\cup S_6|\leq n-1$. Thus there exists $s\in\{1,3,\dots,2n-3\}$ such that $v_{2n-2}v_s\in E(G_{l_2})$ and $v_1v_{s+1}\in E(G_{l_1})$. It follows that $C=v_1P^*v_sv_{2n-2}P^*v_{s+1}v_1$ is a partial $\mathbf{G}$-transversal isomorphic to a cycle of order $2n-2$, where $C$ has an associated injective $\psi_1$ that arises $\psi$ by setting $\psi_1(v_1v_{s+1})=l_1$ and $\psi_1(v_{2n-2}v_s)=l_2$, a contradiction.

Combining above arguments, Claim 3.2 holds.
\end{proof}

\noindent\textbf{Claim 3.3.}
$\mathbf{G}$ contains a partial transversal isomorphic to a cycle of order $2n-2$.

\begin{proof}
Assume for a contradiction that there does not exist a partial $\mathbf{G}$-transversal isomorphic to a cycle of order $2n-2$.

By Claim 3.1, let $P=u_1u_2\cdots u_{2n-2}$ be a partial $\mathbf{G}$-transversal with an associated injective $\sigma$: $E(P)\to [2n-1]$ such that $\sigma(u_iu_{i+1})=i$ for $i\in[2n-3]$ and $\{2n-2,2n-1\}=[2n-1]\setminus im(\sigma)$. Without loss of generality, let $X=\{u_1,u_3,\dots,u_{2n-3},x\}$ and $Y=\{u_2,u_4,\dots,u_{2n-2},y\}$. Clearly, $u_1u_{2n-2}\notin E(G_{2n-2})\cup E(G_{2n-1})$.

If $u_{2n-2}x\in E(G_{2n-2})$ and $u_1y\in E(G_{2n-1})$, then $yu_1Pu_{2n-2}x$ is a $\mathbf{G}$-transversal isomorphic to a Hamiltonian path, which implies $(\mathrm{i})$ holds, a contradiction.
Thus either $u_{2n-2}x\notin E(G_{2n-2})$ or $u_1y\notin E(G_{2n-1})$.

Without loss of generality, we assume $u_{2n-2}x\notin E(G_{2n-2})$. Then $u_1y\in E(G_{2n-1})$ by taking $\{l_1,l_2\}=\{2n-2,2n-1\}$, $\{v_{2n-1},v_{2n}\}=\{x,y\}$ and identifying $P$ as $P^*$ in Claim 3.2.
Now we show $u_{2n-4}x\notin E(G_{2n-2})$. If not, $P'=yu_1Pu_{2n-4}x$ is a partial $\mathbf{G}$-transversal with an associated injective $\sigma_1$ that arises from $\sigma$ by setting $\sigma_1(u_1y)=2n-1$ and $\sigma_1(u_{2n-4}x)=2n-2$. In addition, $\{2n-4,2n-3\}=[2n-1]\setminus im(\sigma_1)$, $V\setminus V(P')=\{u_{2n-3},u_{2n-2}\}$.

If $u_{2n-2}x\in E(G_{2n-4})$, then $u_{2n-3}u_{2n-2}xP'y$ is a  $\mathbf{G}$-transversal isomorphic to a Hamiltonian path since $\sigma(u_{2n-3}u_{2n-2})=2n-3$, which implies $(\mathrm{i})$ holds, a contradiction.
If $u_{2n-2}x\notin E(G_{2n-4})$, we take $\{l_1,l_2\}=\{2n-4,2n-3\}$, $\{v_{2n-1},v_{2n}\}=\{u_{2n-3},u_{2n-2}\}$ and identify $P'$ as $P^*$ in Claim 2. Then $u_{2n-3}y\in E(G_{2n-3})$ by $(\mathrm{c_1})$ of Claim 3.2.
It follows that $C=yu_1Pu_{2n-3}y$ is a partial $\mathbf{G}$-transversal isomorphic to a cycle of order $2n-2$, where $C$ has an associated injective $\sigma_2$ that arises from $\sigma$ by setting $\sigma_2(u_1y)=2n-1$ and $\sigma_2(u_{2n-3}y)=2n-3$, a contradiction. Therefore, $u_{2n-4}x\notin E(G_{2n-2})$.

Clearly, $xy\notin E(G_{2n-2})$. Otherwise, $xyu_1Pu_{2n-2}$ is a  $\mathbf{G}$-transversal isomorphic to a Hamiltonian path, which implies $(\mathrm{i})$ holds, a contradiction. Since $u_1u_{2n-2}\notin E(G_{2n-1})$, $u_1y\in E(G_{2n-1})$ and $u_{2n-4}x,u_{2n-2}x,xy\notin E(G_{2n-2})$, we define the sets $S_7$ and $S_8$ as follows:
\[
S_7=\{s\in\{2,4,\dots,2n-6\}:u_1u_{s+2}\in E(G_{2n-1})\},
\]
\[
S_8=\{s\in\{2,4,\dots,2n-6\}:xu_s\in E(G_{2n-2})\},
\]
then $|S_7|\geq d_{G_{2n-1}}(u_1)-2\geq \left \lceil \frac{n}{2}  \right \rceil-2$ and $|S_8|=d_{G_{2n-2}}(x)\geq \left \lceil \frac{n}{2}  \right \rceil$. Thus $|S_7|+|S_8|\geq n-2$, which implies $S_7\cap S_8\neq\emptyset$ by $|S_7\cup S_8|\leq n-3$.

Let $s\in S_7\cap S_8\subseteq \{2,4,\dots,2n-6\}$. Then $u_1u_{s+2}\in E(G_{2n-1})$ and $xu_s\in E(G_{2n-2})$.

Firstly, $u_{2n-2}u_{s+1}\notin E(G_{s+1})$. If not, $u_1Pu_{s+1}u_{2n-2}Pu_{s+2}u_1$ is a partial $\mathbf{G}$-transversal isomorphic to a cycle of order $2n-2$, a contradiction.

Secondly, $P''=xu_sPu_1u_{s+2}Pu_{2n-2}$ is a partial $\mathbf{G}$-transversal with an associated injective $\sigma_3$ that arises from $\sigma$ by setting $\sigma_3(xu_s)=2n-2$ and $\sigma_3(u_1u_{s+2})=2n-1$, $\{s,s+1\}=[2n-1]\setminus im(\sigma_3)$ and $V\setminus V(P'')=\{u_{s+1},y\}$.
Let $\{l_1,l_2\}=\{s,s+1\}$, $\{v_{2n-1},v_{2n}\}=\{u_{s+1},y\}$ and identify $P''$ as $P^*$ in Claim 3.2. Then $xy\in E(G_s)$ by $u_{2n-2}u_{s+1}\notin E(G_{s+1})$ and $(\mathrm{c_2})$ of Claim 3.2.

Thirdly, $u_{s+1}y\notin E(G_{s+1})$. Otherwise, $u_{s+1}yxP''u_{2n-2}$ is a  $\mathbf{G}$-transversal isomorphic to a Hamiltonian path, which implies $(\mathrm{i})$ holds, a contradiction.

Finally, $P'''=u_{s+1}Pu_1u_{s+2}Pu_{2n-2}$ is a partial $\mathbf{G}$-transversal with an associated injective $\sigma_4$ that arises from $\sigma$ by setting $\sigma_4(u_1u_{s+2})=2n-1$, $\{s+1,2n-2\}=[2n-1]\setminus im(\sigma_4)$ and $V\setminus V(P''')=\{x,y\}$. Let $\{l_1,l_2\}=\{s+1,2n-2\}$, $\{v_{2n-1},v_{2n}\}=\{x,y\}$ and identify $P'''$ as $P^*$ in Claim 3.2. Since $u_{s+1}y\notin E(G_{s+1})$, we have $u_{2n-2}x\in E(G_{2n-2})$ by $(\mathrm{c_1})$ of Claim 3.2, this contradicts the assumption that $u_{2n-2}x\notin E(G_{2n-2})$.

Therefore, we complete the proof of Claim 3.3.
\end{proof}

By Claim 3.3, $\mathbf{G}$ contains a partial $\mathbf{G}$-transversal isomorphic to a cycle of order $2n-2$. Let $X$, $Y$, $C$, $D=(V,A)$ and $\phi$ be defined as in Definition \ref{def1-1}.

Since $d_{G_s}(u_s)\geq \left \lceil \frac{n}{2}  \right \rceil$ for any $u_s\in X\setminus\{x\}$, we have $d^{+}_{D}(u_s)\geq \left \lceil \frac{n}{2}  \right \rceil -1$, and thus
\begin{equation}\tag{$2$}\label{eq:1}
|A|=\sum_{u\in Y}d^{-}_{D}(u)=\sum_{u\in X\setminus\{x\}}d^{+}_{D}(u)\geq(n-1)(\left \lceil \frac{n}{2}  \right \rceil-1).
\end{equation}

\noindent\textbf{Claim 3.4.}
$xy\notin E(G_{2n-1})$.

\begin{proof}
By contradiction, we assume $xy\in E(G_{2n-1})$.

Firstly, we have $d^{-}_{D}(y)=0$. Otherwise, there exists a vertex $u_s\in X\setminus\{x\}$ such that $u_sy\in E(G_s)$, and thus $xyu_sC^{-}u_{s+1}$ is a $\mathbf{G}$-transversal isomorphic to a Hamiltonian path, which implies $(\mathrm{i})$ holds, a contradiction.

For any $t\in\{2,4,\dots,2n-2\}$, if $u_s\in N_{G_t}(x)\cap V(C)$, then $s\neq t$ and $u_{s-1}u_t\notin E(G_{s-1})$, say, $u_{s-1}\notin N^{-}_{D}(u_t)$. Otherwise, if $s=t$, then $xu_t\in E(G_t)$, and thus $yxu_tC^{-}u_{t+1}$ is a $\mathbf{G}$-transversal isomorphic to a Hamiltonian path, which implies $(\mathrm{i})$ holds, a contradiction; if $u_{s-1}u_t\in E(G_{s-1})$, then $yxu_sCu_tu_{s-1}C^{-}u_{t+1}$ is a $\mathbf{G}$-transversal isomorphic to a Hamiltonian path (see Figure 2 (c)) with an associated injective $\phi_1$ that arises from $\phi$ by setting $\phi_1(xy)=2n-1$, $\phi_1(xu_s)=t$ and $\phi_1(u_{s-1}u_t)=s-1$, which implies $(\mathrm{i})$ holds, a contradiction.
Therefore, $d^{-}_{D}(u_t)\leq |X\setminus\{x,u_{t-1}\}|-|N_{G_t}(x)\cap V(C)|\leq n-2-(\left \lceil \frac{n}{2}  \right \rceil -1)=\left \lfloor \frac{n}{2} \right \rfloor -1$. By $d^{-}_{D}(y)=0$, we have
\begin{equation}\tag{$3$}\label{eq:2}
\sum_{u\in Y}d^{-}_{D}(u)=d^{-}_{D}(y)+\sum_{t\in\{2,4,\dots,2n-2\}}d^{-}_{D}(u_t)\leq (n-1)(\left \lfloor \frac{n}{2} \right \rfloor -1).
\end{equation}

By \eqref{eq:1} and \eqref{eq:2}, we have $n$ is even, $d^{-}_{D}(u_t)=\frac{n}{2}-1$ and $|N_{G_t}(x)\cap V(C)|=\frac{n}{2}-1$, which implies $xy\in E(G_t)$ for any $t\in\{2,4,\dots,2n-2\}$ by $d_{G_t}(x)\geq\frac{n}{2}$.

Now we show $N_{G_{2n-1}}(x)\cap V(C)=\emptyset$. Otherwise, there exists some $t'\in\{2,4,\dots,2n-2\}$ such that $u_{t'}\in N_{2n-1}(x)\cap V(C)$, then $yxu_{t'}C^{-}u_{t'+1}$ is a $\mathbf{G}$-transversal isomorphic to a Hamiltonian path with an associated injective $\phi_2$ that arises from $\phi$ by setting $\phi_2(xy)=t'$ and $\phi_2(xu_{t'})=2n-1$, which implies $(\mathrm{i})$ holds, a contradiction.

Therefore, $d_{G_{2n-1}}(x)=1$ by $xy\in E(G_{2n-1})$ and $N_{G_{2n-1}}(x)\cap V(C)=\emptyset$, which is a contradiction to $\delta(G_{2n-1})\geq \frac{n}{2}>1$. This completes the proof of Claim 3.4.
\end{proof}

By Claim $3.4$, we have $|N_{G_{2n-1}}(x)\cap V(C)|=d_{G_{2n-1}}(x)\geq \delta(G_{2n-1})\geq \left \lceil \frac{n}{2}  \right \rceil$ and $|N_{G_{2n-1}}(x)\cap V(C)|=d_{G_{2n-1}}(y)\geq\delta(G_{2n-1})\geq \left \lceil \frac{n}{2}  \right \rceil$.

\noindent\textbf{Claim 3.5.}
$d^{-}_{D}(u)\leq \left \lfloor \frac{n}{2} \right \rfloor -1$ for any $u\in Y$.

\begin{proof}
We first show $d^{-}_{D}(y)\leq \left \lfloor \frac{n}{2} \right \rfloor -1$. If not, $d^{-}_{D}(y)\geq \left \lfloor \frac{n}{2} \right \rfloor$, then there exists $s\in[2n-2]$ such that $u_s\in N^{-}_{D}(y)$ and $u_{s+1}x\in E(G_{2n-1})$ since
$d_D^-(y)+|N_{G_{2n-1}}(x)\cap V(C)|\geq \left \lfloor \frac{n}{2} \right \rfloor+\left \lceil \frac{n}{2}  \right \rceil=n$. Therefore, $xu_{s+1}Cu_{s}y$ is a $\mathbf{G}$-transversal isomorphic to a Hamiltonian path with an associated injective $\phi_3$ that arises from $\phi$ by setting $\phi_3(u_{s+1}x)=2n-1$ and $\phi_3(u_{s}y)=s$, a contradiction.

Next, we show $d^{-}_{D}(u)\leq \left \lfloor \frac{n}{2} \right \rfloor-1$ for any $u\in Y\setminus\{y\}$. Suppose that there exists a vertex $u\in Y\setminus\{y\}$ such that $d^{-}_{D}(u)\geq \left \lfloor \frac{n}{2} \right \rfloor $. Without loss of generality, let $d^{-}_{D}(u_2)\geq \left \lfloor \frac{n}{2} \right \rfloor $.
Since $N_{G_{2n-1}}(x)\subseteq V(C)\cap Y$ and $N_{G_{2}}(y)\subseteq X$, we have $|(N_{G_{2n-1}}(x)\cup N_{G_2}(y))\cap V(C)|\geq d_{G_{2n-1}}(x)+d_{G_2}(y)-1\geq\left \lceil \frac{n}{2}  \right \rceil+(\left \lceil \frac{n}{2}  \right \rceil-1)\geq n-1$. By Lemma \ref{lem2-1}, there exists a $\mathbf{G}$-transversal isomorphic to a Hamiltonian path between $x$ and $y$, which implies $(\mathrm{i})$ holds, a contradiction.

Combining above arguments, we have $d^{-}_{D}(u)\leq \left \lfloor \frac{n}{2} \right \rfloor -1$ for any $u\in Y$.
\end{proof}

\noindent\textbf{Claim 3.6.} There are at least $\left \lfloor \frac{n}{2} \right \rfloor +1$ vertices in $Y$ for which the indegree is $\left \lfloor \frac{n}{2} \right \rfloor -1$.

\begin{proof}
If not, there are at most $\left \lfloor \frac{n}{2} \right \rfloor$ vertices in $Y$ for which the indegree is $\left \lfloor \frac{n}{2} \right \rfloor -1$, thus
\[\sum_{u\in Y}d^{-}_{D}(u)\leq \left \lfloor \frac{n}{2} \right \rfloor (\left \lfloor \frac{n}{2} \right \rfloor -1)+(n-\left \lfloor \frac{n}{2} \right \rfloor )(\left \lfloor \frac{n}{2} \right \rfloor -2)<(n-1)(\left \lceil \frac{n}{2}  \right \rceil-1).
\]
This contradicts \eqref{eq:1}.
\end{proof}

By Claim $3.6$, there are at least $\left \lfloor \frac{n}{2} \right \rfloor$ vertices in $Y\setminus\{y\}$ for which the indegree is equal to $\left \lfloor \frac{n}{2} \right \rfloor -1$, and thus there exists $s\in\{2,4,\dots,2n-2\}$ such that $d^{-}_{D}(u_s)=\left \lfloor \frac{n}{2} \right \rfloor-1$ and $u_{s+1}y\in E(G_{2n-1})$ by $|N_{G_{2n-1}}(y)\cap V(C)|\geq \left \lceil \frac{n}{2}  \right \rceil$ and $\left \lfloor \frac{n}{2} \right \rfloor+\left \lceil \frac{n}{2}  \right \rceil=n$.

If $xy\in E(G_s)$, then $xyu_{s+1}Cu_s$ is a $\mathbf{G}$-transversal isomorphic to a Hamiltonian path, which implies $(\mathrm{i})$ holds, a contradiction.
Thus $|N_{G_s}(x)\cap V(C)|=d_{G_s}(x)$ and $d^{-}_{D}(u_s)+|N_{G_s}(x)\cap V(C)|\geq\left \lfloor \frac{n}{2} \right \rfloor-1+ \left \lceil \frac{n}{2}  \right \rceil=n-1$. By Lemma \ref{lem2-2}, there exists a $\mathbf{G}$-transversal isomorphic to a Hamiltonian path between $x$ and $y$, which implies $(\mathrm{i})$ holds, a contradiction.

Therefore, either $(\mathrm{i})$ or $(\mathrm{ii})$ holds, which completes the proof of Theorem \ref{thm1-3}.
{\hfill $\square$ \par}
\section{The proof of Theorem \ref{thm1-4}}
\hspace{1.5em}In this section, we prove Theorem \ref{thm1-4}. For $n=1,2$, Theorem \ref{thm1-4} holds trivially, thus we assume $n\geq3$ in the following proof.

\noindent\textbf{\textit{Proof of Theorem \ref{thm1-4}}.}
Choose any two vertices $x\in X$ and $y\in Y$. We will show that there is a $\mathbf{G}$-transversal isomorphic to a Hamiltonian path between $x$ and $y$ or $\mathbf{G}=\mathbf{F_{2n-1}}$.

Let $H_i=G_i-\{x,y\}$ for each $i\in[2n-1]$ and $\mathbf{H}=\{H_1,\dots,H_{2n-1}\}$ be a bipartite graph collection with the same bipartition $V_{H}=(X\setminus\{x\},Y\setminus\{y\})$ with $|X\setminus\{x\}|=|Y\setminus\{y\}|=n-1$. Then $\delta(H_i)\geq\left \lceil \frac{n-1}{2}  \right \rceil$ for each $i\in[2n-1]$. By Theorem \ref{thm1-3}, one of the following two cases holds: $(\mathrm{1})$ There exists a partial $\mathbf{H}$-transversal isomorphic to a path of order $2n-2$; $(\mathrm{2})$ $n$ is odd and $H_1=\dots=H_{2n-1}=K_{\frac{n-1}{2},\frac{n-1}{2}}\cup K_{\frac{n-1}{2},\frac{n-1}{2}}$. Now we consider these two cases by showing Claims A and B.

\noindent\textbf{Claim A.} If $n$ is odd and $H_1=\dots=H_{2n-1}=K_{\frac{n-1}{2},\frac{n-1}{2}}\cup K_{\frac{n-1}{2},\frac{n-1}{2}}$, then $\mathbf{G}=\mathbf{F_{2n-1}}$.

\begin{proof}
Without loss of generality, let $X\setminus\{x\}=X_1\cup X_2$ and $Y\setminus\{y\}=Y_1\cup Y_2$, where $X_1=\{x_1,x_2,\dots,x_{\frac{n-1}{2}}\}$, $X_2=\{x_{\frac{n+1}{2}},x_{\frac{n+3}{2}},\dots,x_{n-1}\}$,
$Y_1=\{y_1,y_2,\dots,y_{\frac{n-1}{2}}\}$ and $Y_2=\{y_{\frac{n+1}{2}},y_{\frac{n+3}{2}},\dots,y_{n-1}\}$.
In addition, we take $H_i[X_1,Y_1]=K_{\frac{n-1}{2},\frac{n-1}{2}}$ and $H_i[X_2,Y_2]=K_{\frac{n-1}{2},\frac{n-1}{2}}$ for each $i\in[2n-1]$.
Since $\delta(G_i)\geq\frac{n+1}{2}$ for each $i\in[2n-1]$, we have $x_jy,xy_j\in E(G_i)$ for each $j\in[n-1]$, which implies $G_i=F$ or $G_i=F\cup\{xy\}$, where $F$ is shown in Figure 1.
Therefore, $\mathbf{G}=\mathbf{F_{2n-1}}$.
\end{proof}

\noindent\textbf{Claim B.} If there exists a partial $\mathbf{H}$-transversal isomorphic to a path of order $2n-2$, then there is a $\mathbf{G}$-transversal isomorphic to a Hamiltonian path between $x$ and $y$.

\begin{proof}
We define: $(\mathrm{d_1})$ $\mathbf{G}$ contains a transversal isomorphic to a Hamiltonian path between $x$ and $y$; $(\mathrm{d_2})$ $\mathbf{H}$ contains a partial transversal isomorphic to a cycle of order $2n-2$.

To prove that $(\mathrm{d_1})$ holds, we first show that either $(\mathrm{d_1})$ or $(\mathrm{d_2})$ holds, and then that $(\mathrm{d_2})$ implies $(\mathrm{d_1})$.

\noindent\textbf{Claim B.1.} Either $(\mathrm{d_1})$ or $(\mathrm{d_2})$ holds.

\begin{proof}
Clearly, one partial $\mathbf{H}$-transversal implies there exists a corresponding partial $\mathbf{G}$-transversal. Since there exists a partial $\mathbf{H}$-transversal isomorphic to a path of order $2n-2$, we can take $P=u_1u_2\cdots u_{2n-2}$ to be a partial $\mathbf{G}$-transversal with an associated injective $\tau$: $E(P)\to [2n-1]$ such that $\tau(u_iu_{i+1})=i$ for $i\in[2n-3]$ and $\{2n-2,2n-1\}=[2n-1]\setminus im(\tau)$.
Without loss of generality, let $X=\{u_1,u_3,\dots,u_{2n-3},x\}$ and $Y=\{u_2,u_4,\dots,u_{2n-2},y\}$.

$\mathbf{Case~1:}$ $u_1u_{2n-2}\in E(G_{2n-2})\cup E(G_{2n-1})$.

Then $\mathbf{H}$ contains a partial transversal isomorphic to a cycle of order $2n-2$, thus $(\mathrm{d_2})$ holds.

$\mathbf{Case~2:}$ $u_1u_{2n-2}\notin E(G_{2n-2})\cup E(G_{2n-1})$.

$\mathbf{Case~2.1:}$ $u_{2n-2}x\in E(G_{2n-1})$ and $u_1y\in E(G_{2n-2})$.

Then $yu_1Pu_{2n-2}x$ is a $\mathbf{G}$-transversal isomorphic to a Hamiltonian path between $x$ and $y$, thus $(\mathrm{d_1})$ holds.

$\mathbf{Case~2.2:}$ $u_{2n-2}x\notin E(G_{2n-1})$ or $u_1y\notin E(G_{2n-2})$.

Without loss of generality, we assume $u_{2n-2}x\notin E(G_{2n-1})$. Now we define the sets $S_9$ and $S_{10}$ as follows:
\[
S_9=\{s\in\{3,5,\dots,2n-3\}:u_{2n-2}u_s\in E(G_{2n-1})\},
\]
\[
S_{10}=\{s\in\{1,3,\dots,2n-5\}:u_1u_{s+1}\in E(G_{2n-2})\}.
\]
Since $u_1u_{2n-2}\notin E(G_{2n-2})\cup E(G_{2n-1})$ and $u_{2n-2}x\notin E(G_{2n-1})$, we have $|S_9|=d_{G_{2n-1}}(u_{2n-2})\\\geq\left \lceil \frac{n+1}{2}  \right \rceil$ and $|S_{10}|\geq d_{G_{2n-2}}(u_1)-1\geq\left \lceil \frac{n+1}{2}  \right \rceil-1=\left \lceil \frac{n-1}{2}  \right \rceil$, then $S_9\cap S_{10}\neq\emptyset$ by $|S_9\cup S_{10}|\leq n-1<n\leq|S_9|+|S_{10}|$. Thus there exists $s\in\{3,\dots,2n-5\}$ such that $u_{2n-2}u_s\in E(G_{2n-1})$ and $u_1u_{s+1}\in E(G_{2n-2})$. Therefore, $u_1Pu_su_{2n-2}Pu_{s+1}u_1$ is a partial $\mathbf{H}$-transversal isomorphic to a cycle of order $2n-2$ with an associated injective $\tau_1$ that arises from $\tau$ by setting $\tau_1(u_{2n-2}u_s)=2n-1$ and $\tau_1(u_1u_{s+1})=2n-2$, thus $(\mathrm{d_2})$ holds.

Therefore, Claim B.1 holds.
\end{proof}

\noindent\textbf{Claim B.2.} If $(\mathrm{d_2})$ holds, then $(\mathrm{d_1})$ holds.

\begin{proof}
Since $(\mathrm{d_2})$ holds, there exists a $\mathbf{G}$-transversal isomorphic to a cycle of order $2n-2$. Let $X$, $Y$, $C$, $D=(V,A)$ and $\phi$ be defined as in Definition \ref{def1-1}.
Then $d^{+}_{D}(u_s)\geq \left \lceil \frac{n-1}{2}  \right \rceil=\left \lfloor \frac{n}{2}  \right \rfloor$ by $\delta(G_s)\geq\left \lceil \frac{n+1}{2}  \right \rceil$ for any $u_s\in X\setminus\{x\}$.

$\mathbf{Case~1:}$ $d^{-}_{D}(y)\geq\left \lfloor \frac{n}{2} \right \rfloor$.

Let $u_t\in N_{G_{2n-1}}(x)\cap V(C)$.
If there exists $u_s\in N^{-}_{D}(y)$ such that $u_{s+1}u_{t+1}\in E(G_t)$, then $xu_tC^{-}u_{s+1}u_{t+1}Cu_sy$ is a $\mathbf{G}$-transversal isomorphic to a Hamiltonian path between $x$ and $y$ with an associated injective $\phi_1$ that arises from $\phi$ by setting $\phi_1(xu_t)=2n-1$, $\phi_1(u_{s+1}u_{t+1})=t$ and $\phi_1(u_sy)=s$, thus $(\mathrm{d_1})$ holds.

Now we consider that for any $u_s\in N^{-}_{D}(y)$, $u_{s+1}u_{t+1}\notin E(G_t)$. We have
\[|N_{G_t}(u_{t+1})\cap V(C)|\leq n-1-d^{-}_{D}(y)\leq \left \lceil \frac{n}{2}  \right \rceil-1.
\]
Moreover, $|N_{G_t}(u_{t+1})\cap V(C)|\geq  \left \lceil \frac{n-1}{2}  \right \rceil$ by $d_{G_t}(u_{t+1})\geq \left \lceil \frac{n+1}{2}  \right \rceil$, which implies $n$ is odd and $|N_{G_t}(u_{t+1})\cap V(C)|=\frac{n-1}{2}$, and thus $u_{t+1}y\in E(G_t)$. It follows that $xu_tC^{-}u_{t+1}y$ is a $\mathbf{G}$-transversal isomorphic to a Hamiltonian path between $x$ and $y$ with an associated injective $\phi_2$ that arises from $\phi$ by setting $\phi_2(xu_t)=2n-1$ and $\phi_2(u_{t+1}y)=t$, thus $(\mathrm{d_1})$ holds.

$\mathbf{Case~2:}$ $d^{-}_{D}(y)\leq\left \lfloor \frac{n}{2} \right \rfloor -1$.

If there exists $u\in Y\setminus\{y\}$ such that $d^{-}_{D}(u)\geq\left \lceil \frac{n}{2} \right \rceil$, without loss of generality, we assume $d^{-}_{D}(u_2)\geq\left \lceil \frac{n}{2} \right \rceil$. Since $|(N_{G_{2n-1}}(x)\cup N_{G_2}(y))\cap V(C)|=|N_{G_{2n-1}}(x)\cap V(C)|+|N_{G_2}(y)\cap V(C)|\geq \left \lceil \frac{n-1}{2} \right \rceil+\left \lceil \frac{n-1}{2} \right \rceil\geq n-1$, there exists a $\mathbf{G}$-transversal isomorphic to a Hamiltonian
path between $x$ and $y$ by Lemma \ref{lem2-1}, thus $(\mathrm{d_1})$ holds.

If $d^{-}_{D}(u)\leq\left \lceil \frac{n}{2} \right \rceil-1$ for any $u\in Y\setminus\{y\}$, then there are at least $\left \lceil \frac{n}{2} \right \rceil$ vertices in $Y\setminus\{y\}$ for which the indegree is $\left \lceil \frac{n}{2} \right \rceil-1$. Otherwise, we have
\begin{align*}
\sum_{u\in Y}d^{-}_{D}(u)
&=d^{-}_{D}(y)+\sum_{u\in Y\setminus\{y\}}d^{-}_{D}(u)\\
&\leq(\left \lfloor \frac{n}{2}  \right \rfloor-1)+(\left \lceil \frac{n}{2} \right \rceil-1)(\left \lceil \frac{n}{2} \right \rceil-1)+(n-\left \lceil \frac{n}{2} \right \rceil)(\left \lceil \frac{n}{2} \right \rceil-2)\\
&<(n-1)\left \lfloor \frac{n}{2}\right \rfloor,
\end{align*}
which implies a contradiction by $\displaystyle\sum_{u\in Y}d^{-}_{D}(u)=\sum_{u\in X\setminus\{x\}}d^{+}_{D}(u)\geq(n-1) \left \lfloor \frac{n}{2}  \right \rfloor$.

Since there are at least $\left \lceil \frac{n}{2} \right \rceil$ vertices in $Y\setminus\{y\}$ for which the indegree is $\left \lceil \frac{n}{2} \right \rceil-1$, $|N_{G_{2n-1}}(y)\cap V(C)|\geq \left \lceil \frac{n+1}{2} \right \rceil-1=\left \lfloor \frac{n}{2}  \right \rfloor$ and $\left \lceil \frac{n}{2} \right \rceil+\left \lfloor \frac{n}{2}  \right \rfloor=n$, there exists $k\in\{2,4,\dots,2n-2\}$ such that $d^{-}_{D}(u_k)=\left \lceil \frac{n}{2} \right \rceil-1$ and $u_{k+1}y\in E(G_{2n-1})$. Moreover, since $d^{-}_{D}(u_k)+|N_{G_k}(x)\cap V(C)|\geq \left \lceil \frac{n}{2} \right \rceil-1+\left \lfloor \frac{n}{2}  \right \rfloor=n-1$, there exists a $\mathbf{G}$-transversal isomorphic to a Hamiltonian path between $x$ and $y$ by Lemma \ref{lem2-2}, thus $(\mathrm{d_1})$ holds.

Combining the two cases, $(\mathrm{d_1})$ holds, then Claim B.2 holds.
\end{proof}
By Claims B.1 and B.2, Claim B holds.
\end{proof}

By Claims A and B, we complete the proof of Theorem \ref{thm1-4}.
{\hfill $\square$ \par}

\section{The proof of Theorem \ref{thm1-5}}
\hspace{1.5em}In this section, we prove Theorem \ref{thm1-5}. For $n=1,2$, Theorem \ref{thm1-5} holds trivially, thus we assume $n\geq3$ in the following proof.

\noindent\textbf{\textit{Proof of Theorem \ref{thm1-5}}.}
Let $|X|=n+1$ and $|Y|=n$. We start the proof by establishing the following claims.

\noindent\textbf{Claim 5.1.} $\mathbf{G}$ contains a partial transversal
isomorphic to a path of order $2n$.

\begin{proof}
Let $P$ be a (partial) $\mathbf{G}$-transversal isomorphic to a path such that $|V(P)|$ is maximum.
If $|V(P)|\geq 2n$, then Claim 5.1 holds.

Now we assume that $P=u_1u_2\cdots u_k$ with $|V(P)|=k<2n$. Then $|V\setminus V(P)|\geq 2$. Without loss of generality, let $P$ have an associated injective $\varphi$: $E(P)\to [2n]$ with $\varphi(u_iu_{i+1})=i$ for $i\in[k-1]$ and $\{f_1,f_2\}\subseteq[2n]\setminus im(\varphi)=[k,2n]$.

$\mathbf{Case~1:}$ $u_1,u_k$ belong to the same partition.

By $|V(P)|<2n$, if $u_1,u_k\in X$, then $Y\setminus (V(P)\cap Y)\neq \emptyset$; similarly, if $u_1,u_k\in Y$, then $X\setminus (V(P)\cap X)\neq \emptyset$. Accordingly, we define $z\in Y\setminus (V(P)\cap Y)$ if $u_1,u_k\in X$, and $z\in X\setminus (V(P)\cap X)$ if $u_1,u_k\in Y$. Let $\{Z_1,Z_2\}=\{X,Y\}$ such that $z\in Z_1$.

Consider the following sets:
\[
J_1=\{j\in\{1,3,\dots,k-2\}:u_1u_{j+1}\in E(G_{f_1})\},
\]
\[
J_2=\{j\in\{1,3,\dots,k-2\}:zu_j\in E(G_{f_2})\}.
\]

Clearly, $J_1\cap J_2=\emptyset$. Otherwise, we take $j\in J_1\cap J_2$, then $P'=zu_jPu_1u_{j+1}Pu_k$ is a partial $\mathbf{G}$-transversal isomorphic to a path that is longer than $P$, where $P'$ has an associated injective $\varphi_1$ that arises from $\varphi$ by setting $\varphi_1(zu_j)=f_2$ and $\varphi_1(u_1u_{j+1})=f_1$, a contradiction.

By the maximality of $|V(P)|$, we have $N_{G_{f_1}}(u_1)\subseteq V(P)$ and $u_1z,u_kz\notin E(G_{f_2})$. Thus $|J_1|=d_{G_{f_1}}(u_1)\geq\left \lceil \frac{n+1}{2}  \right \rceil$. Since $J_1\cap J_2=\emptyset$ and $u_kz\notin E(G_{f_2})$, we have $d_{G_{f_2}}(z)\leq |Z_2|-|J_1|-1\leq |X|-|J_1|-1\leq\left \lfloor \frac{n-1}{2} \right \rfloor$, which is a contradiction to $\delta(G_{f_2})\geq \left \lceil \frac{n+1}{2}  \right \rceil$.

$\mathbf{Case~2:}$ $u_1,u_k$ belong to different partitions.

Without loss of generality, we assume $u_1\in X$. Clearly, $X\setminus (V(P)\cap X)\neq \emptyset$ by $|V(P)|<2n$. Then we take $x'\in X\setminus (V(P)\cap X)$. Now we consider the following sets:
\[
J_3=\{j\in\{1,3,\dots,k-1\}:u_ku_j\in E(G_{f_1})\},
\]
\[
J_4=\{j\in\{1,3,\dots,k-1\}:x'u_{j+1}\in E(G_{f_2})\}.
\]

Similarly, $J_3\cap J_4=\emptyset$. Otherwise, $P''=x'u_{j+1}Pu_ku_jPu_1$ is a partial $\mathbf{G}$-transversal isomorphic to a path that is longer than $P$, where $j\in J_3\cap J_4$ and $P'$ has an associated injective $\varphi_2$ that arises from $\varphi$ by setting $\varphi_2(x'u_{j+1})=f_2$ and $\varphi_1(u_ku_j)=f_1$, a contradiction.

By the maximality of $|V(P)|$, we have $N_{G_{f_1}}(u_k)\subseteq V(P)$. Thus $|J_3|=d_{G_{f_1}}(u_k)\geq\left \lceil \frac{n+1}{2}  \right \rceil$. Since $J_3\cap J_4=\emptyset$, we have $d_{G_{f_2}}(x')\leq |Y|-|J_3|\leq\left \lfloor \frac{n-1}{2} \right \rfloor$, which is a contradiction to $\delta(G_{f_2})\geq \left \lceil \frac{n+1}{2}  \right \rceil$.

Therefore, Claim 5.1 holds.
\end{proof}

Without loss of generality, let $X=\{u_1,u_3,\dots,u_{2n-1},u_{2n+1}\}$, $Y=\{u_2,u_4,\dots,u_{2n}\}$ by $|X|=n+1$ and $|Y|=n$.

For convenience, we define $(\mathrm{e_1})$, $(\mathrm{e_2})$, $(\mathrm{e_3})$ as follows: $(\mathrm{e_1})$ $\mathbf{G}$ contains a partial transversal
isomorphic to the disjoint union of a cycle of order $2n-2$ and a $K_2$; $(\mathrm{e_2})$ $\mathbf{G}$ contains a partial transversal isomorphic to a cycle of order $2n$; $(\mathrm{e_3})$ $\mathbf{G}$ contains a transversal isomorphic to a Hamiltonian path.

To prove that $(\mathrm{e_3})$ holds, we first show that Claims 5.2-5.4 hold.

\noindent\textbf{Claim 5.2.} One of $(\mathrm{e_1})$, $(\mathrm{e_2})$, $(\mathrm{e_3})$ holds.

\begin{proof}
By Claim 5.1, let $P=u_1u_2\cdots u_{2n-1}u_{2n}$ be a partial $\mathbf{G}$-transversal with an associated injective $\psi$: $E(P)\to [2n]$ such that $\psi(u_iu_{i+1})=i$ for $i\in[2n-1]$ and $\{2n\}=[2n]\setminus im(\psi)$.

If $u_1u_{2n-2}\in E(G_{2n-2})\cup E(G_{2n})$, then $(P-\{u_{2n-2}u_{2n-1}\})\cup\{u_1u_{2n-2}\}$ is a partial transversal isomorphic to the disjoint union of a cycle of order $2n-2$ and a $K_2$, thus $(\mathrm{e_1})$ holds.

If $u_3u_{2n}\in E(G_2)\cup E(G_{2n})$, then $(P-\{u_2u_3\})\cup\{u_3u_{2n}\}$ is a partial transversal isomorphic to the disjoint union of a cycle of order $2n-2$ and a $K_2$, thus $(\mathrm{e_1})$ holds.

If $u_1u_{2n}\in E(G_{2n})$, then $\mathbf{G}$ contains a partial transversal isomorphic to a cycle of order $2n$, thus $(\mathrm{e_2})$ holds.

If $u_{2n}u_{2n+1}\in E(G_{2n})$, then $\mathbf{G}$ contains a transversal isomorphic to a Hamiltonian path, thus $(\mathrm{e_3})$ holds.

Next, we assume that $u_1u_{2n-2}\notin E(G_{2n-2})\cup E(G_{2n})$, $u_3u_{2n}\notin E(G_2)\cup E(G_{2n})$ and $u_1u_{2n},u_{2n}u_{2n+1}\notin E(G_{2n})$.
Then we consider the following sets:
\[
J_5=\{j\in\{3,5,\dots,2n-1\}:u_3u_{j+1}\in E(G_{2})\},
\]
\[
J_6=\{j\in\{3,5,\dots,2n-1\}:u_{2n}u_j\in E(G_{2n})\}.
\]

Since $u_2u_3\in E(G_2)$ and $u_1u_{2n},u_{2n}u_{2n+1}\notin E(G_{2n})$, we have $|J_5|=d_{G_{2}}(u_3)-1\geq\left \lceil \frac{n-1}{2}  \right \rceil$, $|J_6|=d_{G_{2n}}(u_{2n})\geq\left \lceil \frac{n+1}{2}  \right \rceil$, and thus $|J_5|+|J_6|\geq n$. Since $|J_5\cup J_6|\leq n-1$, we have $J_5\cap J_6\neq\emptyset$. It follows that there exists $j\in\{3,5,\dots,2n-1\}$ such that $u_3u_{j+1}\in E(G_{2})$ and $u_{2n}u_j\in E(G_{2n})$. Therefore, $(u_3Pu_{2n}-\{u_{j}u_{j+1}\})\cup\{u_3u_{j+1},u_{2n}u_{j},u_1u_2\}$ is a partial transversal isomorphic to the disjoint union of a cycle of order $2n-2$ and a $K_2$ with an associated injective $\psi_1$ that arises $\psi$ by setting $\psi_1(u_3u_{j+1})=2$ and $\psi_1(u_{2n}u_{j})=2n$, thus $(\mathrm{e_1})$ holds.

Combining above arguments, Claim 5.2 holds.
\end{proof}

\noindent\textbf{Claim 5.3.}
If $(\mathrm{e_1})$ holds, then $(\mathrm{e_2})$ or $(\mathrm{e_3})$ holds.

\begin{proof}
Without loss of generality, let $C=u_1u_2\dots u_{2n-2}u_1$ and $u_{2n-1}u_{2n}$ be a partial $\mathbf{G}$-transversal isomorphic to the disjoint union of a cycle of order $2n-2$ and a $K_2$ with an associated injective $\phi$: $E(C)\to [2n]$ such that $\phi(u_iu_{i+1})=i$ for $i\in[2n-3]\cup\{2n-1\}$, $\phi(u_{2n-2}u_1)=2n-2$ and $\{2n\}=[2n]\setminus im(\phi)$.

Let $X\setminus\{u_{2n-1},u_{2n+1}\}=X_1$, $D=(V,A)$ be a digraph on vertex set $V$ such that
\[A=\bigcup_{u_i\in X_1}\{\overrightarrow{u_iu}:u\in Y, u_iu\in E(G_i) \text{ and } u\neq u_{i+1}\}.\]
Then $d^{-}_{D}(u)\leq |X_1|=n-1$ for any $u\in Y$, and
\begin{equation}\tag{$4$}\label{eq:3}
|A|=\displaystyle\sum_{u\in Y}d^{-}_{D}(u)=\sum_{u\in X_1}d^{+}_{D}(u)\geq(n-1)\left \lceil \frac{n-1}{2}  \right \rceil=(n-1)\left \lfloor \frac{n}{2} \right \rfloor.
\end{equation}

$\mathbf{Case~1:}$ $u_{2n}u_{2n+1}\in E(G_{2n})$.

For any $t\in\{2,4,\dots,2n-2\}$, we have $|N_{G_t}(u_{2n+1})\cap V(C)|\geq d_{G_t}(u_{2n+1})-1\geq\left \lceil \frac{n+1}{2}  \right \rceil -1=\left \lfloor \frac{n}{2} \right \rfloor$.

If there exists $u_s\in N_{G_t}(u_{2n+1})\cap V(C)$ such that $u_{s-1}u_t\in E(G_{s-1})$, then $u_{2n-1}u_{2n}u_{2n+1}\\u_sCu_tu_{s-1}C^{-}u_{t+1}$ is a $\mathbf{G}$-transversal isomorphic to a Hamiltonian path with an associated injective $\phi_1$ that arises from $\phi$ by setting $\phi_1(u_{2n}u_{2n+1})=2n$, $\phi_1(u_{2n+1}u_s)=t$ and $\phi_1(u_{s-1}u_t)=s-1$, thus $(\mathrm{e_3})$ holds.

If $u_{s-1}u_t\notin E(G_{s-1})$ for any $u_s\in N_{G_t}(u_{2n+1})\cap V(C)$, then $s\neq t$ and $u_{s-1}\notin N^{-}_{D}(u_t)$. Thus $d^{-}_{D}(u_t)\leq |X\setminus\{u_{2n-1},u_{2n+1},u_{t-1}\}|-|N_{G_t}(u_{2n+1})\cap V(C)|\leq n-2-\left \lfloor \frac{n}{2} \right \rfloor=\left \lfloor \frac{n-3}{2} \right \rfloor$. It follows that
\begin{align*}
d^{-}_{D}(u_{2n})
=\sum_{u\in Y}d^{-}_{D}(u)-\sum_{t\in\{2,4,\dots,2n-2\}}d^{-}_{D}(u_t)
\geq (n-1)\left \lfloor \frac{n}{2} \right \rfloor-(n-1)\left \lfloor \frac{n-3}{2} \right \rfloor
\geq n-1.
\end{align*}
Thus $d^{-}_{D}(u_{2n})=n-1$ by $d^{-}_{D}(u_{2n})\leq n-1$, which implies $\{u_1,u_3,\dots,u_{2n-3}\}=N^{-}_{D}(u_{2n})$.

In addition, $|N_{G_{2n-1}}(u_{2n+1})\cap V(C)|\geq 1$ by $d_{G_{2n-1}}(u_{2n+1})\geq \left \lceil \frac{n+1}{2}  \right \rceil\geq2$. We assume $u_{s'}\in N_{G_{2n-1}}(u_{2n+1})\cap V(C)$. Then $u_{s'-1}\in N^{-}_{D}(u_{2n})$, and thus $u_{2n+1}u_{s'}Cu_{s'-1}u_{2n}u_{2n+1}$ is a partial transversal isomorphic to a cycle of order $2n$ with an associated injective $\phi_2$ that arises from $\phi$ by setting $\phi_2(u_{2n}u_{2n+1})=2n$, $\phi_2(u_{2n+1}u_{s'})=2n-1$ and $\phi_2(u_{s'-1}u_{2n})=s'-1$, thus $(\mathrm{e_2})$ holds.

$\mathbf{Case~2:}$ $u_{2n}u_{2n+1}\notin E(G_{2n})$.

If $d^{-}_{D}(u_{2n})=n-1$, similar to Case $1$, we take $u_s\in N_{G_{2n}}(u_{2n+1})\cap V(C)$, then $u_{s-1}\in N^{-}_{D}(u_{2n})$, and $u_{2n+1}u_sCu_{s-1}u_{2n}u_{2n-1}$ is a $\mathbf{G}$-transversal isomorphic to a Hamiltonian path with an associated injective $\phi_3$ that arises from $\phi$ by setting $\phi_3(u_{2n+1}u_s)=2n$ and $\phi_3(u_{s-1}u_{2n})=s-1$, thus $(\mathrm{e_3})$ holds.

If $d^{-}_{D}(u_{2n})<n-1$, then there exists $u\in Y\setminus\{u_{2n}\}$ such that $d^{-}_{D}(u)\geq\left \lfloor \frac{n}{2} \right \rfloor$. Otherwise, by \eqref{eq:3}, we have
\[(n-1)\left \lfloor \frac{n}{2} \right \rfloor\leq\sum_{u\in Y}d^{-}_{D}(u)=\sum_{u\in Y\setminus\{u_{2n}\}}d^{-}_{D}(u)+d^{-}_{D}(u_{2n})<(n-1)(\left \lfloor \frac{n}{2} \right \rfloor-1)+n-1=(n-1)\left \lfloor \frac{n}{2} \right \rfloor,
\]
a contradiction. Without loss of generality, we take $d^{-}_{D}(u_2)\geq \left \lfloor \frac{n}{2} \right \rfloor$.

In addition, $|N_{G_{2n}}(u_{2n+1})\cap V(C)|=d_{G_{2n}}(u_{2n+1})$ and $|N_{G_2}(u_{2n})\cap V(C)|\geq d_{G_2}(u_{2n})-2$. Thus $|(N_{G_{2n}}(u_{2n+1})\cup N_{G_2}(u_{2n}))\cap V(C)|=|N_{G_{2n}}(u_{2n+1})\cap V(C)|+|N_{G_2}(u_{2n})\cap V(C)|\geq \left \lceil \frac{n+1}{2} \right \rceil+\left \lceil \frac{n+1}{2} \right \rceil-2\geq n-1$.

When $|N_{G_2}(u_{2n})\cap V(C)|=0$, then $n=3$, $N_{G_2}(u_{2n})=\{u_{2n-1},u_{2n+1}\}$ and $|N_{G_{2n}}(u_{2n+1})\cap V(C)|\geq \left \lceil \frac{n+1}{2} \right \rceil=2$, thus $u_{2n+1}u_2\in E(G_{2n})$ by $n=3$. Therefore, $u_{2n-1}u_{2n}u_{2n+1}u_2C^{-}u_3$ is a $\mathbf{G}$-transversal isomorphic to a Hamiltonian path with an associated injective $\phi_4$ that arises from $\phi$ by setting $\phi_4(u_{2n}u_{2n+1})=2$ and $\phi_4(u_{2n+1}u_2)=2n$, thus $(\mathrm{e_3})$ holds.

When $|N_{G_2}(u_{2n})\cap V(C)|>0$, then there exists $u_t\in Y\cap V(C)$ such that $u_t\in N_{G_{2n}}(u_{2n+1})$ and $N_{G_2}(u_{2n})\cap \{u_{t+1},u_{t-1}\}\neq \emptyset$ by $|(N_{G_{2n}}(u_{2n+1})\cup N_{G_2}(u_{2n}))\cap V(C)|\geq n-1$. Without loss of generality, we assume $u_{2n}u_{t+1}\in E(G_2)$. By the similar argument to that in the proof of Lemma \ref{lem2-1} and $d^{-}_{D}(u_2)\geq \left \lfloor \frac{n}{2} \right \rfloor$, there exists $s\in\{3,\dots,2n-3\}$ such that $u_s\in N^{-}_{D}(u_2)$ and $u_{s+1}u_3\in E(G_t)$. Thus  $(C-\{u_2u_3,u_su_{s+1},u_tu_{t+1}\})\cup\{u_{2n+1}u_t,u_3u_{s+1},u_2u_s,u_{2n}u_{t+1},u_{2n-1}u_{2n}\}$ is a $\mathbf{G}$-transversal isomorphic to a Hamiltonian path with an associated injective $\phi_5$ that arises from $\phi$ by setting $\phi_5(u_{2n+1}u_t)=2n$, $\phi_5(u_3u_{s+1})=t$, $\phi_5(u_2u_s)=s$ and $\phi_5(u_{2n}u_{t+1})=2$, thus $(\mathrm{e_3})$ holds.

Combining above arguments, Claim 5.3 holds.
\end{proof}

\noindent\textbf{Claim 5.4.}
If $(\mathrm{e_2})$ holds, then $(\mathrm{e_3})$ holds.

\begin{proof}
Without loss of generality, let $C=u_1u_2\cdots u_{2n}u_1$ be a partial $\mathbf{G}$-transversal isomorphic to a cycle of order $2n$ with an associated injective $\sigma$: $E(C)\to [2n]$ such that $\sigma(u_iu_{i+1})=i$ for $i\in[2n-1]$ and $\sigma(u_{2n}u_1)=2n$. Let $D=(V,A)$ be a digraph on vertex set $V$ such that
\[A=\bigcup_{u_i\in X\setminus\{u_{2n+1}\}}\{\overrightarrow{u_iu}:u\in Y, u_iu\in E(G_i) \text{ and } u\neq u_{i+1}\}.\]
Then $|A|=\displaystyle\sum_{u\in Y}d^{-}_{D}(u)=\sum_{u\in X\setminus\{u_{2n+1}\}}d^{+}_{D}(u)\geq n\left \lceil \frac{n-1}{2}  \right \rceil$, thus there exists $u_t\in Y$ such that $d^{-}_{D}(u_t)\geq\left \lceil \frac{n-1}{2}  \right \rceil$ by $|Y|=n$.

Now we define the sets $J_7$ and $J_8$ as follows:
\[
J_7=\{j\in\{1,3,\dots,2n-1\}:u_j\in N^{-}_{D}(u_t)\},
\]
\[
J_8=\{j\in\{1,3,\dots,2n-1\}:u_{j+1}u_{2n+1}\in E(G_t)\}.
\]
Clearly, $|J_7|=d^{-}_{D}(u_t)\geq\left \lceil \frac{n-1}{2}  \right \rceil$ and
$|J_8|=d_{G_t}(u_{2n+1})\geq \left \lceil \frac{n+1}{2}  \right \rceil$. In addition, $t-1\notin J_7$ by $u_{t-1}\notin N^{-}_{D}(u_t)$.

If $t-1\in J_8$, then $u_tu_{2n+1}\in E(G_t)$, thus $u_{2n+1}u_tC^{-}u_{t+1}$ is a $\mathbf{G}$-transversal isomorphic to a Hamiltonian path, which implies $(\mathrm{e_3})$ holds.

If $t-1\notin J_8$, then $|J_7\cup J_8|\leq n-1$. Since $|J_7|+|J_8|\geq \left \lceil \frac{n-1}{2}  \right \rceil+\left \lceil \frac{n+1}{2}  \right \rceil\geq n$, we have $J_7\cap J_8\neq\emptyset$.
Thus there exists $j\in\{1,3,\dots,2n-1\}\setminus\{t-1\}$ such that $u_j\in N^{-}_{D}(u_t)$ and $u_{j+1}u_{2n+1}\in E(G_t)$. It follows that $u_{2n+1}u_{j+1}Cu_tu_jC^{-}u_{t+1}$ is a $\mathbf{G}$-transversal isomorphic to a Hamiltonian path with an associated injective $\sigma_1$ that arises from $\sigma$ by setting $\sigma_1(u_{j+1}u_{2n+1})=t$ and $\sigma_1(u_tu_j)=j$, thus $(\mathrm{e_3})$ holds.

Therefore, Claim 5.4 holds.
\end{proof}

By Claims 5.2-5.4, we complete the proof of Theorem \ref{thm1-5}.
{\hfill $\square$ \par}

\section*{Funding}
\hspace{1.5em}This work is supported by the National Natural Science Foundation of China (Grant
Nos. 12371347, 12271337).

\section*{Declarations}
\textbf{Conflict of interest} The authors declare that they have no known competing financial interests or personal relationships that could have appeared to influence the work reported in this paper.\\
\textbf{Data availability} No data was used for the research described in the article.

\end{document}